\documentclass[11pt]{article}
\usepackage{pifont}

\usepackage{color}

\usepackage{amssymb}
\usepackage{amsfonts}
\usepackage{graphicx}
\usepackage{amsmath}
\usepackage{mathrsfs}
\usepackage{psfrag}
\usepackage{latexsym}
\usepackage{setspace}
\usepackage{amsbsy}
\usepackage{epsfig}
\usepackage{enumerate}
\usepackage{bm}

\usepackage{indentfirst}

\usepackage{epstopdf}

\usepackage{fancyhdr}
\usepackage{longtable}
\usepackage{booktabs}
\usepackage{bm}
\usepackage{type1cm}
\usepackage{xcolor}
\usepackage{pifont}
\usepackage{titlesec}
\usepackage{float}
\usepackage{comment}
\usepackage{mathrsfs}
\usepackage{geometry}
\geometry{left=2cm,right=2cm,top=2.5cm,bottom=2.5cm}
\usepackage{ulem}

\usepackage{setspace}
\usepackage{amsthm}
\usepackage{extarrows}

 \setlength{\parskip}{3pt plus1pt minus2pt}
 \setlength{\baselineskip}{20pt plus2pt minus1pt}

\newtheorem{thm}{Theorem}[section]
\newtheorem{prop}{Proposition}[section]
\newtheorem{coro}{Corollary}[section]

\newtheorem{lem}{Lemma}[section]
\newtheorem{definition}{Definition}[section]
\newtheorem{remark}{Remark}[section]

\newcommand{\R}{\mathbb{R}}

\newcommand{\HE}{\mathcal{E}}

\newcommand{\F}{\mathcal{F}}

\newcommand{\bF}{\mathbb{F}}
\newcommand{\lt}{\left}
\newcommand{\rt}{\right}
\newcommand{\ep}{\varepsilon}
\newcommand{\ph}{\varphi}

\newcommand{\ga}{\Gamma^{t,x}}
\newcommand{\Y}{Y^{\ep}}
\newcommand{\YY}{\widetilde{Y}^{\ep}}
\newcommand{\Z}{Z^{\ep}}
\newcommand{\ZZ}{\widetilde{Z}^{\ep}}
\newcommand{\CW}{\Omega}

\numberwithin{equation}{section}
\title{Representation theorems for generators of BSDEs and the extended $g$-expectations in probability  spaces with general filtration}
\date{}

\author{Panyu Wu, Guodong Zhang$^*$\\
{\small Zhongtai Securities Institute for Financial Studies, Shandong University}\\ {\small Jinan 250100, China; Email: wupanyu@sdu.edu.cn; zhang\_gd@mail.sdu.edu.cn}\\
{\small$^*$Corresponding author}}

\begin{document}

\maketitle

\begin{abstract}
In this paper, we establish representation theorems for generators of backward stochastic differential equations (BSDEs in short) in probability spaces with general filtration from the  perspective of transposition solutions of BSDEs. As applications, we give a converse comparison theorem  for generators of BSDEs and also some characterizations to positive homogeneity, independence of $y$, subadditivity and convexity  of generators of BSDEs. Then, we extend concepts of $g$-expectations and conditional $g$-expectations to the probability spaces with general filtration and investigate their properties.

\par  $\textit{Keywords:}$ Filtration, Backward stochastic differential equations, Representation theorem, Converse comparison theorem, $g$-expectation.

 \textbf{MSC (2010):} 60H10,\ 60H30.
\end{abstract}



\section{Introduction}
Let $T> 0$ and $(\CW,\F,\bF,P)$ be a  filtered probability space with  $\bF= \{\F_t\}_{t\in[0,T]}$ satisfied the usual condition, on which a $d$-dimensional standard Brownian motion $\{W_t\}_{t\in[0,T]}$ is defined. When $\bF$ is the  natural filtration (generated by the Brownian motion $\{W_t\}_{t\in[0,T]}$ and augmented by all the $P$-null sets), Pardoux and Peng (\cite{Peng1990}), established the well-posedness of a backward
stochastic differential equation (BSDE for short) of the type
\begin{equation*}
\left\{\begin{aligned}
&dY_t=-g(t,Y_t,Z_t)dt+Z_tdW_t,\quad t\in[0,T]\\
&Y_T=\xi\\
\end{aligned}\ ,\right.
\end{equation*}
provided the generator $g$ satisfies Lipschitz and square integrable conditions and the terminal $\xi$ is square integrable. With the solutions of BSDEs, Peng (\cite{Peng1997-2}) introduced a type of dynamically consistent nonlinear expectations---$g$-expectations  and conditional $g$-expectations. Similarly to the classical case, it is also proved that $g$-expectations and conditional $g$-expectations preserve all properties of classical expectations (except the linearity) in \cite{Peng1997-2}. Since then, the properties and applications of BSDEs or $g$-expectations  under the natural filtration probability spaces have been widely studied, see for example  \cite{Briand,Chen,Chenepstein,Coquethu,Huchen,Mayao,Peng1997,Gianin}.

Since BSDE theory is under the framework of natural filtration, it  has extensive applications  in the area of complete financial market, such as, the pricing of contingent claims, the theory of recursive utilities and the risk measurement. However, when the filtration is a non-natural filtration, the financial market is incomplete , the classical BSDE theory can not be applied. On the other hand,
it's well known that stochastic differential equation (SDE for short) theory is built on general complete filtered probability spaces, not only on the natural filtration space which is the BSDE theory  built on. So, whether in theory or in application, it is meaningful to study the properties of BSDEs in general filtration spaces.
However, in the general case  when the filtration $\mathbb{F}$ maybe larger than the natural filtration, the classical martingale representation theorem maybe fails
which plays a critical role for the well-posedness of BSDEs.
As far as we know, there are few works on BSDEs  under the general filtration probability space.  El Karoui, Huang (\cite{Karouihuang}) and Liang et al. (\cite{Lianglyons}) obtained the well-posedness of BSDEs with general filtration, without using the martingale representation theorem. However, in their framework, it is hard to analyze the properties of the solutions of BSDEs. Cohen and Elliott (\cite{CohenElliott})  gave the well-posedness and comparison theorem of the BSDEs with general filtration, under the assumption that $L^2(\Omega,\mathcal{F}_T, P)$ is separable. In this framework, Cohen (\cite{Cohen}) also investigated the $g$-expectations and conditional $g$-expectations on $L^2(\Omega,\mathcal{F}_T, \bF, P)$  for general filtration.
A general type of martingale representation theorem is
fundamental to the approach in \cite{CohenElliott,Cohen}. Royer considered the $g$-expectations and conditional $g$-expectations when the filtration is generated by both the Brownian motion and a Poisson random measure in \cite{Royer}.

In  \cite{LuZhang}, L\"{u} and Zhang  proved the well-posedness of some linear and semilinear BSDEs with general filtration, without using the martingale representation theorem. The point of their approach is to introduce a new notion of
solution, that is, the transposition solution, which coincides with the usual strong solution when the filtration is natural. A comparison theorem for transposition solutions is also presented.

During the numerous results on BSDEs in the natural filtration space, representation theorem for generator of BSDE is a very important result since it represents the generator $g$ of BSDE by the limit of solutions of the corresponding BSDEs. It is firstly established by Briand et al. (\cite{Briand}) for BSDEs whose generators satisfy Lipshcitz condition and two additional assumptions that $E[\sup_{0\le t\le T}|g(t,0,0)|^2]<\infty$ and $(g(t,y,z))_{t\in [0,T]}$ is continuous in $t$. Then it is generalized by a series of work of Jiang (\cite{Jiang2005a,Jiang2005,Jiang2006,Jiang2008}) to case that $g$ satisfies Lipschitz condition. For  non-Lipschitz condition, representation theorem for generators of BSDEs is further studied, see for example  \cite{Fanjiang,Fan2011,Liu,Mayao,Zhengli} and references therein.

In this paper, we will consider the properties of BSDEs on general filtration probability spaces  from the  perspective of transposition solutions of BSDEs.
The main results are the representation theorems for the generators of BSDEs which extend the representation theorems of \cite{Briand} to the  general filtration probability spaces.  As applications, we give a converse comparison theorem  for generators of BSDEs and also some characterizations to positive homogeneity, independence of $y$, subadditivity and convexity  of generators of BSDEs. Then we will extend the $g$-expectations and conditional $g$-expectations to $L^2(\Omega,\mathcal{F}_T, \bF, P)$  for general filtration $\bF$, and  we also establish
some properties of this generalized $g$-expectations.


The remainder of this paper is organized as follows. In Section 2, we introduce some preliminaries of transposition solutions of BSDEs. In Section 3, we establish the representation theorem for the generators and the converse comparison theorem of BSDEs  in general probability spaces. We also give some characterizations to generators of BSDEs.  In Section 4, we proceed to
generalize the definition of  $g$-expectations and conditional $g$-expectations in general probability spaces. Then we establish some properties of the extended  $g$-expectations, such as ``Zero-one" law, constant preserving and time consistency. We also obtain some necessary and sufficient conditions for positive homogeneity, translation invariance, subadditivity and convexity of this generalized g-expectations, respectively.

\section{Transposition Solution of BSDE}
Firstly, we introduce some notions which will be used in this paper. For $x, y\in\R^n$, $|x|$ denotes its Euclidian norm and $xy$ denotes the usual scalar product in $\mathbb{R}^n$. An $n\times d$ matrix will be considered as an element $z\in\R^{n\times d};$ note that its Euclidean norm is given by $|z|=\sqrt{trace(zz^*)}$.
For any $t\in[0,T]$, define\\
\small
$\bullet$ $L^2_{\F_t}(\CW,\R^n):=\{\xi\!:\CW\rightarrow\R^n|\ \xi \textit{ is } \F_t\textit{-measurable random variable and } \|\xi\|^2=E[|\xi|^2]<\infty\}$;\\
$\bullet$  $L^2_{\bF}(\CW,L^p(t,T;\R^{n\times d})):=\left\{\ph\!\!:[t,T]\times\CW\rightarrow\R^{n\times d}|\ \ph \textit{ is } \{\mathcal{F}_r\}_{r\in [t,T]} \textit{-adapted process and } \|\ph\|^2=E\lt[\lt(\int_t^T|\ph(r)|^pdr\rt)^{\frac{2}{p}}\rt]<\infty\right\}$, where $p\ge1$;\\
$\bullet$  $L^2_{\bF}(\CW,D([t,T];\R^n)):=\left\{\ph\!:[t,T]\times\CW\rightarrow\R^n|\ \ph \textit{ is } \{\mathcal{F}_r\}_{r\in [t,T]} \textit{-adapted } c\grave{a}dl\grave{a}g \textit{ process and }
  \|\ph\|^2=E\left[\sup\limits_{r\in[t,T]}|\ph(r)|^2\right]<\infty\right\}$;\\
$\bullet$  $L^2_{\bF}(\CW,C([t,T];\R^n)):=\left\{\ph\!:[t,T]\times\CW\rightarrow\R^n|\ \ph\in L^2_{\bF}(\CW,D([t,T];\R^n)\textit{ and }\ph \textit{ is continuous process}  \right\}.$
 \normalsize

Let us consider a function $g$, defined on $\CW\times[0,T]\times\R^n\times\R^{n\times d}$, with values in $\R^n$, such that the process $(g(t,y,z))_{t\in[0,T]}$ is progressively
measurable for each $(y,z)\in\R^n\times\R^{n\times d}$. From this point onwards, for notational simplicity, we shall regard $\omega$ as implicit in the function
$g$, whenever this does not lead to confusion.
In this paper, we may use  the following assumptions for  the function $g$:
\smallskip\\
(A1). There exists a constant $K\geq0$ such that $dP\times dt$-a.s.
$$|g(t,y,z)-g(t,y',z')|\leq K(|y-y'|+|z-z'|),\quad \quad \text{ for all } (y,z),(y',z')\in\R^n\times\R^{n\times d};$$
(A2). The process $(g(t,0,0))_{t\in[0,T]}\in L^2_{\bF}(\CW,L^1(0,T;\R^n));$\\
(A3). $E[\sup_{t\in[0,T]}|g(t,0,0)|^2]<\infty$;\color{black}\\
(A4). For all $(y,z)\in\R^n\times\R^{n\times d}$, $t\mapsto g(t,y,z)$ is right continuous in $[0,T[$ $P$-a.s.\\
(A5). For all $ y\in\R^n,\ g(t,y,0)=0$,  $dP\times dt$-a.s.
\smallskip

Under the assumptions (A1) and (A2), L\"{u} and Zhang (\cite{LuZhang}) proved that  the following BSDE has a unique  transposition solution,
\begin{equation}\label{bsde}
\left\{\begin{aligned}
&dY_t=-g(t,Y_t,Z_t)dt+Z_tdW_t,\quad t\in[0,T]\\
&Y_T=Y^T\\
\end{aligned}\ ,\right.
\end{equation}
where $Y^T\in L^2_{\F_T}(\CW,\R^n)$.

\begin{definition}[\cite{LuZhang} Definition 1.1]\label{de1}
We call $(Y_{\cdot},Z_{\cdot})\in L^2_{\bF}(\CW,D([0,T];\R^n))\times L^2_{\bF}(\CW,L^2(0,T;\R^{n\times d}))$ a transposition solution to (\ref{bsde}) if for any $t\in [0,T]$, $u_{\cdot}\in L^2_{\bF}(\CW,L^1(t,T;\R^n))$, $v_{\cdot}\in L^2_{\bF}(\CW,L^2(t,T;\R^{n\times d})) $ and $\eta\in L^2_{\F_t}(\CW,\R^n)$, the following identity
\begin{equation}\label{transp}
E\lt[Y^T X_T+\int_t^TX_rg(r,Y_r,Z_r)dr\rt]=E\lt[Y_t\eta+\int_t^Tu_rY_rdr+\int_t^Tv_rZ_rdr\rt]
\end{equation}
holds, where $X_{\cdot} \in L^2_{\bF}(\CW,C([t,T];\R^n))$ is the unique strong solution of the following SDE
\begin{equation}\label{sde}
\left\{\begin{aligned}
&dX_r=u_rdr+v_rdW_r,\quad r\in[t,T]\\
&X_t=\eta\\
\end{aligned}\ \right.
.\end{equation}
\end{definition}

\begin{prop}\label{prop3}
$(Y_{\cdot},Z_{\cdot})\in L^2_{\bF}(\CW,D([0,T];\R^n))\times L^2_{\bF}(\CW,L^2(0,T;\R^{n\times d}))$ is a transposition solution to (\ref{bsde}) if and only if for any $0\le s\le t\le T$, $u_{\cdot}\in L^2_{\bF}(\CW,L^1(s,T;\R^n))$, $v_{\cdot}\in L^2_{\bF}(\CW,L^2(s,T;\R^{n\times d})) $ and $\eta\in L^2_{\F_s}(\CW,\R^n)$, the following identity
\begin{equation*}
E\lt[Y_t X_t+\int_s^tX_rg(r,Y_r,Z_r)dr\rt]=E\lt[Y_s\eta+\int_s^tu_rY_rdr+\int_s^tv_rZ_rdr\rt]
\end{equation*}
holds, where $X_{\cdot}\in L^2_{\bF}(\CW,C([s,T];\R^n))$ is the unique strong solution of the following SDE
\begin{equation*}
\left\{\begin{aligned}
&dX_r=u_rdr+v_rdW_r,\quad r\in[s,t]\\
&X_s=\eta\\
\end{aligned}\ \right.
.\end{equation*}
\end{prop}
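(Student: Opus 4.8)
The plan is to deduce the asserted two-time identity from the single global identity of Definition~\ref{de1} by applying that definition at two different initial times and subtracting, the converse being an immediate specialization.

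For the forward implication, assume $(Y_\cdot,Z_\cdot)$ is a transposition solution and fix $0\le s\le t\le T$ together with $u_\cdot, v_\cdot$ and $\eta\in L^2_{\F_s}(\CW,\R^n)$. First I would freeze the dynamics after $t$ by setting $\hat u_r=u_r\mathbf{1}_{[s,t]}(r)$ and $\hat v_r=v_r\mathbf{1}_{[s,t]}(r)$ for $r\in[s,T]$; these still belong to $L^2_{\bF}(\CW,L^1(s,T;\R^n))$ and $L^2_{\bF}(\CW,L^2(s,T;\R^{n\times d}))$. If $\hat X_\cdot$ denotes the strong solution on $[s,T]$ of (\ref{sde}) driven by $(\hat u,\hat v)$ with $\hat X_s=\eta$, then $\hat X$ agrees with $X$ on $[s,t]$ and, its coefficients vanishing on $(t,T]$, stays constant thereafter, so $\hat X_r=X_t$ for $r\in[t,T]$; in particular $\hat X_T=X_t\in L^2_{\F_t}(\CW,\R^n)$.

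Applying Definition~\ref{de1} at the initial time $s$ with data $(\hat u,\hat v,\eta)$ and inserting the freezing property yields
\begin{equation*}
E\lt[Y^T X_t+\int_s^t X_r g(r,Y_r,Z_r)dr+\int_t^T X_t g(r,Y_r,Z_r)dr\rt]=E\lt[Y_s\eta+\int_s^t u_r Y_r dr+\int_s^t v_r Z_r dr\rt],
\end{equation*}
while applying it at the initial time $t$ with zero controls and initial datum $X_t$, whose associated solution is the constant process $X_t$, yields
\begin{equation*}
E\lt[Y^T X_t+\int_t^T X_t g(r,Y_r,Z_r)dr\rt]=E\lt[Y_t X_t\rt].
\end{equation*}
Subtracting the second identity from the first cancels the terms $E[Y^T X_t]$ and $E[\int_t^T X_t g(r,Y_r,Z_r)dr]$, and rearranging gives precisely the desired identity on $[s,t]$.

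For the converse, I would specialize the assumed identity to $t=T$: there $X_\cdot$ solves (\ref{sde}) on $[s,T]$ with $X_s=\eta$, and since $Y_T=Y^T$ is the terminal value attached to (\ref{bsde}), the identity coincides with that of Definition~\ref{de1} with $s$ playing the role of $t$; as $s$ and the data are arbitrary, $(Y_\cdot,Z_\cdot)$ is a transposition solution. The only genuine obstacle lies in the forward direction, namely justifying the freezing step: that the zero-extended controls remain admissible, that the solution of (\ref{sde}) is truly constant on $[t,T]$, and that $X_t$ is a legitimate $\F_t$-measurable initial condition for the second application. All three are routine consequences of the unique strong solvability of (\ref{sde}) invoked in Definition~\ref{de1} and of elementary properties of the $L^2$ spaces, so the argument reduces to the bookkeeping above.
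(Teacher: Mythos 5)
Your argument is correct and supplies exactly the details the paper omits (its entire proof is ``follows directly from Definition~\ref{de1}''): extending the controls by zero past $t$, applying the definition once at initial time $s$ with $(\hat u,\hat v,\eta)$ and once at initial time $t$ with zero controls and initial datum $X_t$, and subtracting is the natural route, and the admissibility checks you list ($\hat u,\hat v$ stay in the right spaces, $\hat X$ is constant on $[t,T]$, $X_t\in L^2_{\F_t}(\CW,\R^n)$) all go through. The one point worth stating explicitly in the converse is that the two-time identity never involves $Y^T$ (with $s=t$ it is vacuous), so the equivalence only holds when the candidate process is understood to satisfy $Y_T=Y^T$; you do invoke this, and with that understood, setting $t=T$ recovers Definition~\ref{de1} verbatim.
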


\begin{proof}[\textup{\bf Proof.}]
This result follows directly from Definition \ref{de1}, so we omit the details.
\end{proof}

\begin{thm}[\cite{LuZhang} Theorem 4.1]\label{th2}
For any given $Y^T\in L^2_{\F_T}(\CW,\R^n)$ and given function $g$ satisfied assumptions (A1) and (A2), the BSDE (\ref{bsde}) admits a unique transposition solution  $(Y_{\cdot},Z_{\cdot})\in L^2_{\bF}(\CW,D([0,T];\R^n))\times L^2_{\bF}(\CW,L^2(0,T;\R^{n\times d}))$.
Furthermore, there is a constant $C_{K,T} > 0$, depending only on $K$ and $T$, such that
\begin{equation}\label{estim}
\|(Y.,Z.)\|_{L^2_{\bF}(\CW,D([0,T];\R^n))\times L^2_{\bF}(\CW,L^2(0,T;\R^{n\times d}))}\leq C_{K,T}\lt[\|g(\cdot,0,0)\|_{L^2_{\bF}(\CW,L^1(0,T;\R^n))}+\|Y^T\|_{L^2_{\F_T}(\CW,\R^n)}\rt].
\end{equation}
\end{thm}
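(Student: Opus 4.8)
Since Theorem \ref{th2} is quoted from \cite{LuZhang}, one may invoke it directly; the self-contained plan I would follow relies on the transposition (duality) method, precisely because the general filtration $\bF$ need not support a martingale representation theorem. The plan is to first treat the \emph{reference equation} in which the driver is a fixed process, then recover the general Lipschitz case by a contraction argument, and finally read off the estimate \eqref{estim} from the construction.

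First I would fix $f\in L^2_{\bF}(\CW,L^1(0,T;\R^n))$ and $Y^T\in L^2_{\F_T}(\CW,\R^n)$ and solve the transposition problem \eqref{transp} with $g(r,Y_r,Z_r)$ replaced by $f(r)$. For admissible test data $(\eta,u,v)$, the forward SDE \eqref{sde} is well-posed on $\bF$ by classical SDE theory (no martingale representation is needed there), and standard $L^2$ estimates for $X$ show that the map
\[
(\eta,u,v)\longmapsto \Lambda(\eta,u,v):=E\lt[Y^TX_T+\int_0^TX_rf(r)\,dr\rt]
\]
is a bounded linear functional on the Hilbert space of test data. By the Riesz representation theorem there is a unique representing element, which I would \emph{identify} with the triple $(Y_0,Y,Z)$; this is exactly the transposition solution of the reference equation, and the norm of the Riesz representer yields \eqref{estim} in this special case. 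Uniqueness is immediate: the difference of two transposition solutions pairs to zero against all $(\eta,u,v)$, and choosing first $u=v=0$ with $\eta$ arbitrary, then $\eta=0$ with $u,v$ arbitrary, forces the difference to vanish.

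Next I would pass to the general driver by a fixed-point argument. Define $\Phi:(y,z)\mapsto(Y,Z)$, where $(Y,Z)$ is the reference-equation transposition solution with driver $f(r):=g(r,y_r,z_r)$; by (A1) and (A2) this $f$ lies in $L^2_{\bF}(\CW,L^1(0,T;\R^n))$, so $\Phi$ is well defined. Applying the reference-equation estimate to the difference $\Phi(y,z)-\Phi(y',z')$ and using the Lipschitz bound (A1), I would show that $\Phi$ is a contraction on the product space equipped with an exponentially weighted norm such as $E\lt[\int_0^Te^{\beta r}\lt(|Y_r|^2+|Z_r|^2\rt)dr\rt]$, for $\beta$ chosen large relative to $K$. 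The unique fixed point is the transposition solution of \eqref{bsde}, and the estimate \eqref{estim} follows by applying the reference bound once more, now with $f(r)=g(r,Y_r,Z_r)$ and absorbing the $(y,z)$-dependent part through (A1); the resulting constant depends only on $K$ and $T$.

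The main obstacle is the existence step for the reference equation. Without a martingale representation theorem, $(Y,Z)$ cannot be obtained from the Doob--Meyer/It\^o decomposition of a conditional expectation, so everything rests on making the duality construction rigorous. Two points are delicate: choosing the Hilbert space so that $\Lambda$ is genuinely bounded---awkward because the natural time-regularity of the control $u$ is $L^1$ rather than $L^2$, so one typically works with an $L^2$-dense subclass and extends by continuity---and upgrading the raw Riesz representer to the claimed path regularity, namely $Y\in L^2_{\bF}(\CW,D([0,T];\R^n))$ with a c\`adl\`ag modification and $Z\in L^2_{\bF}(\CW,L^2(0,T;\R^{n\times d}))$. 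Producing the c\`adl\`ag version of $Y$ by hand, rather than inheriting it from a martingale, is the technically heaviest part and would occupy the bulk of the argument.
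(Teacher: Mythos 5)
The paper itself gives no proof of this theorem: it is quoted verbatim from L\"u and Zhang \cite{LuZhang} (their Theorem 4.1), so the only meaningful comparison is with the argument there. Your two-stage architecture --- first a reference equation with a frozen driver $f\in L^2_{\bF}(\CW,L^1(0,T;\R^n))$, then a Banach fixed point for the Lipschitz driver in a $\beta$-weighted norm, with \eqref{estim} read off from the linear estimate --- is exactly the structure of that proof, and your uniqueness argument (test against $u=v=0$ with $\eta$ arbitrary, then $\eta=0$ with $u,v$ arbitrary) is also how uniqueness is obtained there.

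The gap is in the existence step for the reference equation; you have correctly located the difficulty but not closed it. The space of test data $(\eta,u,v)\in L^2_{\F_t}(\CW,\R^n)\times L^2_{\bF}(\CW,L^1(t,T;\R^n))\times L^2_{\bF}(\CW,L^2(t,T;\R^{n\times d}))$ is not a Hilbert space because of the $L^1$-in-time component, so there is no Riesz representer for the full functional $\Lambda$, and ``work on an $L^2$-dense subclass and extend by continuity'' does not repair this: the functional you must represent is bounded only in the weaker $L^1$-type norm, and density in a stronger norm gives no control over it. The resolution in \cite{LuZhang} is to decouple the two pairings rather than seek one global representer. One defines $Y$ \emph{explicitly} by $Y_s=E[\,Y^T+\int_s^T f(r)\,dr\,|\,\F_s]$, so that $Y_s+\int_0^s f(r)\,dr$ is an $L^2$-martingale and the c\`adl\`ag modification of $Y$ comes for free from the usual conditions on $\bF$ (only the existence of c\`adl\`ag versions of martingales is needed, not martingale representation); this also disposes of the $u$--$Y$ pairing, which is then verified directly. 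The Riesz representation theorem is applied only to the functional $v\mapsto E[\,Y^TX_T+\int_t^TX_rf(r)\,dr]$ with $\eta=0$, $u=0$, on the genuine Hilbert space $L^2_{\bF}(\CW,L^2(t,T;\R^{n\times d}))$, and this produces $Z$; one then checks the full identity \eqref{transp} for general $(\eta,u,v)$ by It\^o's formula. With $Y$ and $Z$ so constructed, the remainder of your plan (contraction, absorption of the Lipschitz terms into $C_{K,T}$) goes through as you describe.
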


\begin{remark}
From the proof  of Theorem 4.1 in \cite{LuZhang}, we can see that $C_{K,T}$ can be uniformly bounded when $T$ is bounded. Thus, in the proof of Lemma \ref{rep-lem1} in this paper, we say that $C_{K,T}$ only depends on $K$, and denoted by $C_K$.
\end{remark}

\begin{remark}
In \cite{LuZhang}, the dimension of the Brownian motion is considered as $d=1$. All the definitions and results in \cite{LuZhang} can be extended to the case $d>1$ (see \cite{LuZhang2}).
\end{remark}


From now on, we consider $1$-dimensional BSDEs, that is $n=1$. The following theorem give a
comparison theorem by transposition solutions which is a little improved version of Theorem 5.1 in \cite{LuZhang}. The proof makes no essential difference, so we omit it.
\begin{thm}\label{comparison}
Let $(Y_{\cdot}^i,Z_{\cdot}^i)\in L^2_{\bF}(\CW,D([0,T];\R))\times L^2_{\bF}(\CW,L^2(0,T;\R^d)),i=1,2$ be the transposition solutions of the following BSDEs respectively,
\begin{equation*}
\left\{\begin{aligned}
&dY_t^i=-g_i(t,Y_t^i,Z_t^i)dt+Z_t^idW_t,\quad t\in[0,T],\\
&Y_T^i=\xi^i\\
\end{aligned}\right.\quad\quad i=1,2,
\end{equation*}
where $\xi^i\in L^2_{\F_T}(\CW,\R)$ and $g_i$ satisfy assumptions (A1) and (A2). If for $\xi^1\geq\xi^2$ $P$-a.s. and
$$dP \times dt\text{-a.s.},\quad g_1(t,Y^1_t,Z_t^1)\geq g_2(t,Y_t^1,Z_t^1),$$
then, for any $t\in[0,T]$
$$Y_t^1\geq Y_t^2,\quad P\text{-a.s.}$$
Moreover, this comparison is strict, that is, $Y_t^1=Y_t^2\ P$-a.s. for some $t\in[0,T]$ if and only if $\xi^1=\xi^2\ P$-a.s. and $g_1(r,Y^1_r,Z_r^1)= g_2(r,Y_r^1,Z_r^1)$ $P$-a.s. for a.e. $r\in[t,T]$.
\end{thm}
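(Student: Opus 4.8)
The plan is to reduce the comparison to a single duality identity obtained by subtracting the two transposition identities, and then to make a clever choice of test process that eliminates every term except the sign-definite ones. First I would introduce the difference processes $\hat Y_r=Y_r^1-Y_r^2$, $\hat Z_r=Z_r^1-Z_r^2$, together with $\xi:=\xi^1-\xi^2\ge0$ and the nonnegative driver increment $\phi_r:=g_1(r,Y_r^1,Z_r^1)-g_2(r,Y_r^1,Z_r^1)\ge0$. Using the Lipschitz assumption (A1) on $g_2$ in the standard way, I would linearize $g_2(r,Y_r^1,Z_r^1)-g_2(r,Y_r^2,Z_r^2)=a_r\hat Y_r+b_r\hat Z_r$ with progressively measurable coefficients $a,b$ bounded by $K$. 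Writing the transposition identity (\ref{transp}) for $(Y^1,Z^1,g_1,\xi^1)$ and for $(Y^2,Z^2,g_2,\xi^2)$ against a common test process $X$ solving (\ref{sde}) and subtracting then yields, for every admissible $(\eta,u,v)$,
\[
E\Big[\xi X_T+\int_t^T X_r\big(a_r\hat Y_r+b_r\hat Z_r+\phi_r\big)\,dr\Big]=E\Big[\hat Y_t\eta+\int_t^T u_r\hat Y_r\,dr+\int_t^T v_r\hat Z_r\,dr\Big].
\]

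The key step is to choose the test process to be the solution of the adjoint linear SDE, i.e. to take $u_r=X_ra_r$ and $v_r=X_rb_r$, so that $X$ solves $dX_r=X_ra_r\,dr+X_rb_r\,dW_r$ on $[t,T]$ with $X_t=\eta$. Because $a,b$ are bounded and progressively measurable, this $X$ is the stochastic exponential
\[
X_r=\eta\exp\Big(\int_t^r a_s\,ds-\tfrac12\int_t^r|b_s|^2\,ds+\int_t^r b_s\,dW_s\Big),
\]
which lies in $L^2_{\bF}(\CW,C([t,T];\R))$ and is nonnegative whenever $\eta\ge0$; moreover the corresponding $u=Xa\in L^2_{\bF}(\CW,L^1(t,T;\R))$ and $v=Xb\in L^2_{\bF}(\CW,L^2(t,T;\R^d))$ are admissible. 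With this choice the $\hat Y_r$ and $\hat Z_r$ integrands cancel and the identity collapses to $E[\hat Y_t\eta]=E[\xi X_T+\int_t^T X_r\phi_r\,dr]$. Since $\xi\ge0$, $\phi_r\ge0$, and $X_r\ge0$ for $\eta\ge0$, the right-hand side is nonnegative; taking $\eta=\mathbf 1_A$ for arbitrary $A\in\F_t$ forces $\hat Y_t\ge0$, that is $Y_t^1\ge Y_t^2$ $P$-a.s.

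For the strict comparison I would run the same identity with $\eta\equiv1$, so that $X_r>0$ for all $r$. If $Y_t^1=Y_t^2$ $P$-a.s. then $E[\hat Y_t\eta]=0$, and since both $\xi X_T$ and $\int_t^T X_r\phi_r\,dr$ are nonnegative they must each vanish; positivity of $X$ then gives $\xi=0$, i.e. $\xi^1=\xi^2$ $P$-a.s., and $\phi_r=0$ for a.e. $r\in[t,T]$, i.e. $g_1(r,Y_r^1,Z_r^1)=g_2(r,Y_r^1,Z_r^1)$. The converse is immediate: if $\xi^1=\xi^2$ and $\phi\equiv0$ on $[t,T]$ then the right-hand side is $0$, and combining $E[\hat Y_t]=0$ with the already-established $\hat Y_t\ge0$ gives $\hat Y_t=0$.

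I expect the main obstacle to be conceptual rather than computational: in the general-filtration setting there is no martingale representation and hence no Girsanov-type change of measure available to solve the linear BSDE explicitly, so the whole argument must be phrased purely through the transposition duality. The crucial insight is that choosing the test process $X$ as the solution of the adjoint linear SDE plays exactly the role the density process plays classically; once this is in place the remaining work---verifying that $a,b$ are well defined and bounded via (A1), that the stochastic exponential is admissible and nonnegative, and that the linear SDE is well posed on the general filtered space---is routine and uses only standard SDE estimates valid on general filtered probability spaces.
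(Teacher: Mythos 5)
Your proposal is correct and follows essentially the same route as the paper, which omits the argument and defers to the proof of Theorem 5.1 in L\"u--Zhang: subtract the two transposition identities, linearize $g_2$ via (A1), and take the test process to be the nonnegative stochastic exponential solving the adjoint linear SDE, so that the duality collapses to $E[\hat Y_t\eta]=E[\xi X_T+\int_t^T X_r\phi_r\,dr]\ge 0$. The only point worth flagging is that $X=\eta M$ need not be square-integrable for a general $\eta\in L^2_{\F_t}$, but since you only ever use bounded $\eta$ (indicators and the constant $1$) the admissibility of $(u,v,\eta)$ is not an issue.
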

\section{Representation Theorem for Generators of BSDEs}
Based on the framework of transposition solution of BSDEs, we obtain the representation theorem for the generator of BSDE, which extends the representation  theorem (Proposition 2.3) of \cite{Briand} with weaker conditions to the general filtration spaces.
From now on, with out special notification, we always denote $\lt(Y_t\lt(g,T,\xi\rt),Z_t\lt(g,T,\xi\rt)\rt)_{t\in[0,T]}$  the unique transposition solution of BSDE with generator function $g$ and terminal condition $\xi$ at terminal time $T$.

Let $b(\cdot,\cdot,\cdot):\Omega\times[0,T]\times\R^m\rightarrow\R^m$ and $\sigma(\cdot,\cdot,\cdot):\Omega\times[0,T]\times\R^m\rightarrow\R^{m\times d}$ be two functions such that for any $x\in\R^m$, $b(\cdot,\cdot,x)$ and $\sigma(\cdot,\cdot,x)$  are both progressively measurable; and let $b$ and $\sigma$ satisfy the following assumptions (H1)-(H3), for notational simplicity, we also regard $\omega$ as implicit in $b$ and $\sigma$.
\smallskip\\
(H1). Lipschitz condition: There exists a  constant $L_1>0$ such that $P$-a.s.
$$|b(t,x)-b(t,x')|+|\sigma(t,x)-\sigma(t,x')|\leq L_1|x-x'|,\quad \text{ for all }x,x'\in\R^m,t\in[0,T];$$
(H2). Linear growth condition: There exists a  constant $L_2>0$ such that $P$-a.s.
$$|b(t,x)|+|\sigma(t,x)|\leq L_2(1+|x|),\quad \text{ for all }x\in\R^m,t\in[0,T];$$
(H3). For any $x\in\R^m$, $t\mapsto b(t,x)$  and $t\mapsto \sigma(t,x)$ are both right continuous in $[0,T[$ $P$-a.s.
\smallskip

For a fixed $(t,x,y,p)\in[0,T[\times\R^m\times\R\times\R^m$, let us denote by $\ga_.$  the solution of the following $m$-dimensional SDE
\begin{equation*}
\ga_s=x+\int_t^sb(r,\ga_r)dr+\int_t^s\sigma(r,\ga_r)dW_r,\quad t\leq s\leq T,
\end{equation*}
with the usual convention $\ga_s=x$ if $s<t$.

For any $\ep>0$ small enough, without loss of generality we always suppose $\ep<1$ in this section, the transposition solution of the BSDE
\begin{equation}\label{rep-bsde}
\left\{\begin{aligned}
&dY_s=-g(s,Y_s,Z_s)ds+Z_sdW_s,\quad s\in[0,t+\ep]\\
&Y_{t+\ep}=y+p\lt(\ga_{t+\ep}-x\rt)\\
\end{aligned}\ ,\right.
\end{equation}
is denoted by  $\lt(Y_s\lt(g,t+\ep,y+p(\ga_{t+\ep}-x)\rt),Z_s\lt(g,t+\ep,y+p(\ga_{t+\ep}-x)\rt)\rt)_{s\in[0,t+\ep]}$.
\begin{thm}\label{rep}
Let $b$, $\sigma$  be two functions satisfied assumptions (H1)-(H3).

(1). If $g$ satisfies assumptions (A1) and (A3), then, for any $(x,y,p)\in\R^m\times\R\times\R^m$,  we have
\begin{equation}\label{repthm}
g(t,y,\sigma^*(t,x) p)+pb(t,x)=\lim\limits_{\ep\to0^+}\frac{1}{\ep}\lt[Y_s\lt(g,t+\ep,y+p(\ga_{t+\ep}-x)\rt)-y\rt]\quad \text{in }L^2,
\end{equation}
holds for almost every $t\in[0,T[$.

(2). If $g$ satisfies assumptions (A1)(A3) and (A4), then (\ref{repthm}) holds for any $(t,x,y,p)\in[0,T[\times\R^m\times\R\times\R^m$.
\end{thm}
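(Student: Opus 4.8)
\emph{The plan} is to reduce the claim to a conditional-expectation identity for the value $\Y_t$ at time $t$ and then to control the diffusion coefficient $\Z$ on the shrinking interval $[t,t+\ep]$ by comparison with an explicit reference pair built from the forward process $\ga$. Write $\xi:=y+p(\ga_{t+\ep}-x)$ and $(\Y_r,\Z_r):=(Y_r(g,t+\ep,\xi),Z_r(g,t+\ep,\xi))$. First I would apply the transposition identity over $[t,t+\ep]$ furnished by Proposition \ref{prop3} to the test data $u\equiv0$, $v\equiv0$ and an arbitrary $\eta\in L^2_{\F_t}(\CW,\R)$, for which the associated forward equation gives $X_r\equiv\eta$; the identity then reads $E[\eta(\xi+\int_t^{t+\ep}g(r,\Y_r,\Z_r)\,dr)]=E[\eta\,\Y_t]$ for all such $\eta$. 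Since $\Y$ is adapted, $\Y_t$ is $\F_t$-measurable, whence
\[
\Y_t=y+p\,E\lt[\int_t^{t+\ep}b(r,\ga_r)\,dr\,\Big|\,\F_t\rt]+E\lt[\int_t^{t+\ep}g(r,\Y_r,\Z_r)\,dr\,\Big|\,\F_t\rt],
\]
where I used $E[\ga_{t+\ep}-x\,|\,\F_t]=E[\int_t^{t+\ep}b(r,\ga_r)\,dr\,|\,\F_t]$ (the stochastic integral has zero conditional mean). After dividing by $\ep$, it suffices to prove that the drift term tends to $pb(t,x)$ and the generator term to $g(t,y,\sigma^*(t,x)p)$ in $L^2$.

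The device replacing the martingale-representation solution used in the natural-filtration proof of \cite{Briand} --- which is unavailable here --- is the explicit pair $\check Y_r:=y+p(\ga_r-x)$ together with its diffusion coefficient $\check Z_r$, which ``equals'' $\sigma^*(r,\ga_r)p$ and satisfies $d\check Y_r=pb(r,\ga_r)\,dr+\check Z_r\,dW_r$, $\check Y_{t+\ep}=\xi$. Thus $\check Y$ is a genuine It\^o process solving, in the strong sense, the BSDE on $[t,t+\ep]$ with terminal $\xi$ and the $(y,z)$-independent generator $-pb(\cdot,\ga_\cdot)$. Applying It\^o's formula to $r\mapsto X_r\check Y_r$ for the forward process $X$ attached to arbitrary test data and taking expectations (the stochastic integrals vanishing thanks to the integrability furnished by (H1)--(H2)) shows that $(\check Y,\check Z)$ obeys the transposition identity, hence is the transposition solution with that generator, \emph{without} any representation theorem. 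By linearity the difference $(\delta Y,\delta Z):=(\Y-\check Y,\Z-\check Z)$ is then the transposition solution on $[t,t+\ep]$ with zero terminal and generator $G(r):=g(r,\Y_r,\Z_r)+pb(r,\ga_r)$.

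The step I expect to be the main obstacle is a sharp bound on $\delta Z$: a direct use of (\ref{estim}) only bounds $E\int_t^{t+\ep}|\Z_r|^2dr$ by $\|\xi\|^2=O(1)$ and so loses the factor needed after dividing by $\ep$. To gain it I would feed the estimate of Theorem \ref{th2} (with the horizon-uniform constant $C_K$ of the Remark) into the difference, $\|(\delta Y,\delta Z)\|^2\le C_K^2\,E[(\int_t^{t+\ep}|G(r)|\,dr)^2]$, and split $G(r)=[g(r,\Y_r,\Z_r)-g(r,\check Y_r,\check Z_r)]+H(r)$ with $H(r):=g(r,\check Y_r,\check Z_r)+pb(r,\ga_r)$. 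By (A1) the first bracket is $\le K(|\delta Y_r|+|\delta Z_r|)$, while $H$ involves only the reference data and obeys $E\int_t^{t+\ep}H(r)^2dr\le M\ep$ (using (A3), (H2) and $E\sup_{[t,t+\ep]}|\ga_r-x|^2=O(\ep)$). Cauchy--Schwarz in $r$ produces the crucial extra $\ep$, giving
\[
\|(\delta Y,\delta Z)\|^2\le c_K(\ep^2+\ep)\,\|(\delta Y,\delta Z)\|^2+c_K\,\ep\,E\lt[\int_t^{t+\ep}H(r)^2\,dr\rt],
\]
so that for small $\ep$ the quadratic term is absorbed into the left-hand side and $\|(\delta Y,\delta Z)\|^2\le c_K' M\ep^2$; in particular $E\sup_{[t,t+\ep]}|\delta Y_r|^2$ and $E\int_t^{t+\ep}|\delta Z_r|^2dr$ are $O(\ep^2)$.

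Finally I would assemble the two limits. The remainder of the generator term, $\frac1\ep E[\int_t^{t+\ep}(g(r,\Y_r,\Z_r)-g(r,\check Y_r,\check Z_r))\,dr\,|\,\F_t]$, is dominated via (A1) and conditional Jensen by $(\tfrac1\ep E\int_t^{t+\ep}(|\delta Y_r|^2+|\delta Z_r|^2)\,dr)^{1/2}=O(\ep^{1/2})\to0$ in $L^2$. Replacing $(\check Y_r,\check Z_r)$ first by $(y,\sigma^*(r,x)p)$ (error controlled by (H1) and the $\ga$-estimate) and then $\sigma^*(r,x)p$ by $\sigma^*(t,x)p$ (error controlled by (H3), for every $t$) reduces the generator term to $\frac1\ep\int_t^{t+\ep}\psi(r)\,dr$ for the fixed process $\psi(r):=g(r,y,\sigma^*(r,x)p)$, while the same Jensen reduction sends the drift term to $pb(t,x)$ once $b(r,\ga_r)$ is compared with $b(r,x)$ and then with $b(t,x)$ via (H1),(H3). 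The last point is the $t$-limit $\frac1\ep\int_t^{t+\ep}\psi(r)\,dr\to\psi(t)=g(t,y,\sigma^*(t,x)p)$: under (A4), combining (A4) with (H3) makes $\psi$ right-continuous at every $t$, which yields part (2) for all $t$; without (A4), $\psi\in L^2([0,T];L^2(\CW))$ satisfies $\frac1\ep\int_t^{t+\ep}\|\psi(r)-\psi(t)\|^2_{L^2}\,dr\to0$ at each of its Lebesgue points, i.e. for a.e. $t$, which yields part (1).
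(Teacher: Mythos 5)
Your proposal is correct and follows essentially the same route as the paper: subtract the explicit forward pair $(y+p(\Gamma^{t,x}_r-x),\sigma^*(r,\Gamma^{t,x}_r)p)$ to obtain a zero-terminal transposition BSDE for the difference, use the a priori estimate of Theorem \ref{th2} to get the crucial $O(\ep^2)$ bound, then perform the Lipschitz replacements via (H1)--(H3) and conclude by Lebesgue differentiation (a.e.\ $t$) or right-continuity plus dominated convergence (all $t$). The only deviations are cosmetic: you re-derive the a priori bound by an absorption argument instead of citing (\ref{estim}) for the shifted equation, phrase the duality step as a conditional-expectation identity with conditional Jensen rather than the paper's choice of test function $\eta$, and invoke the Bochner-valued Lebesgue differentiation theorem in place of the scalar Lemma \ref{rep-lem2}.
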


The proof of Theorem \ref{rep} requires the following lemmas, which was motivated by \cite{Jiang2005}.
\begin{lem}\label{rep-lem1}
Suppose $g,b,\sigma$ satisfy the assumptions in Theorem \ref{rep}, let $1\leq q\le 2$, then for any $(t,x,y,p)\in[0,T[\times\R^m\times\R\times\R^m$, the following two statements are equivalent:
\begin{description}
  \item{(1).} $\displaystyle g(t,y,\sigma^*(t,x) p)+pb(t,x)=\lim\limits_{\ep\to0^+}\frac{1}{\ep}\lt[Y_s\lt(g,t+\ep,y+p(\ga_{t+\ep}-x)\rt)-y\rt]\quad \text{in }L^q;$
  \item{(2).}  $\displaystyle g(t,y,\sigma^*(t,x) p)=\lim\limits_{\ep\to0^+}E\lt[\frac{1}{\ep}\int_t^{t+\ep}g(r,y,\sigma^*(t,x) p)dr\big|\mathcal{F}_t\rt] \quad\text{in }L^q.$
\end{description}

\end{lem}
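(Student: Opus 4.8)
The plan is to reduce both (1) and (2) to a single $L^q$-convergence by writing the value of the transposition solution at the left endpoint $t$ as a conditional expectation and comparing it with an explicit It\^o process; since $\|\cdot\|_{L^q}\le\|\cdot\|_{L^2}$ on the probability space for $q\le 2$, it suffices to obtain every bound in $L^2$. Write $Y_t:=Y_t(g,t+\ep,y+p(\ga_{t+\ep}-x))$ and let $Z$ be the corresponding transposition solution on $[t,t+\ep]$. First I would apply Proposition \ref{prop3} on $[t,t+\ep]$ with $u\equiv0$, $v\equiv0$ and arbitrary $\eta\in L^2_{\F_t}(\CW,\R)$; the associated test SDE then has the constant solution $X_r\equiv\eta$, so the identity collapses to $E[\eta(Y_{t+\ep}+\int_t^{t+\ep}g(r,Y_r,Z_r)dr)]=E[\eta Y_t]$ for all such $\eta$, whence
\[ Y_t=E\lt[Y_{t+\ep}+\int_t^{t+\ep}g(r,Y_r,Z_r)dr\,\Big|\,\F_t\rt]. \]
Using $\ga_{t+\ep}-x=\int_t^{t+\ep}b(r,\ga_r)dr+\int_t^{t+\ep}\sigma(r,\ga_r)dW_r$ and that the stochastic integral has zero $\F_t$-conditional expectation (its integrand being square integrable by (H2) and the standard moment estimates for $\ga$), I obtain
\[ \frac{1}{\ep}\lt[Y_t-y\rt]=\frac{1}{\ep}E\lt[\int_t^{t+\ep}\lt(p\,b(r,\ga_r)+g(r,Y_r,Z_r)\rt)dr\,\Big|\,\F_t\rt]. \]

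Because $g(t,y,\sigma^*(t,x)p)+pb(t,x)$ is $\F_t$-measurable, statement (1) is exactly the $L^q$-convergence of $\frac{1}{\ep}[Y_t-y]-pb(t,x)$ to $g(t,y,\sigma^*(t,x)p)$, while statement (2) is the convergence of $\frac{1}{\ep}E[\int_t^{t+\ep}g(r,y,\sigma^*(t,x)p)dr\,|\,\F_t]$ to the same $\F_t$-measurable limit; hence it suffices to show that the difference of these two quantities tends to $0$. Subtracting the previous display and writing $pb(t,x)=\frac{1}{\ep}E[\int_t^{t+\ep}pb(t,x)dr\,|\,\F_t]$, the contraction property of conditional expectation bounds this difference by $\|\Xi_1\|_{L^2}+\|\Xi_2\|_{L^2}$, where $\Xi_1:=\frac{|p|}{\ep}\int_t^{t+\ep}|b(r,\ga_r)-b(t,x)|dr$ and, by the Lipschitz assumption (A1), $\Xi_2:=\frac{K}{\ep}\int_t^{t+\ep}(|Y_r-y|+|Z_r-\sigma^*(t,x)p|)dr$. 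The term $\Xi_1\le|p|\sup_{[t,t+\ep]}|b(r,\ga_r)-b(t,x)|$ tends to $0$ in $L^2$ because $b(\cdot,x)$ is right continuous by (H3), $\ga_r\to x$ as $r\to t^+$, and (H1)--(H2) with the moment estimates for $\ga$ supply a dominating function.

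The hard part is $\Xi_2$, and within it the $Z$-contribution, since $Z$ is only square integrable in $r$. To handle it I would introduce the explicit reference pair $\bar Y_r:=y+p(\ga_r-x)$, $\bar Z_r:=p\sigma(r,\ga_r)$ on $[t,t+\ep]$, which satisfies $\bar Y_{t+\ep}=Y_{t+\ep}$ and lies in the right space by (H2). Since $\bar Y$ and every test-SDE solution $X$ are genuine continuous It\^o processes, It\^o's formula for $\bar Y_rX_r$ verifies that $(\bar Y,\bar Z)$ is the transposition solution on $[t,t+\ep]$ with terminal value $Y_{t+\ep}$ and source-type generator $-p\,b(r,\ga_r)$. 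By the linearity of the identity in Definition \ref{de1}, the difference $(\delta Y,\delta Z):=(Y-\bar Y,Z-\bar Z)$ is then the transposition solution with terminal value $0$ and generator $G(r):=g(r,Y_r,Z_r)+p\,b(r,\ga_r)$. Invoking the a priori estimate of Theorem \ref{th2} on $[t,t+\ep]$ (with the uniform constant $C_K$), splitting $g(r,Y_r,Z_r)=[g(r,Y_r,Z_r)-g(r,\bar Y_r,\bar Z_r)]+g(r,\bar Y_r,\bar Z_r)$ and using (A1) to absorb the resulting $K(|\delta Y|+|\delta Z|)$ contributions into the left-hand side (their coefficients being $O(\ep)+O(\sqrt\ep)\to0$), I expect the bound $\|(\delta Y,\delta Z)\|=O(\ep)$, the explicit remainder of $G$ being controlled in sup-$L^2$ norm via (A3), (H2) and the moment estimates for $\ga$.

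With this estimate $\Xi_2\to0$ in $L^2$. Indeed, $Y_r-y=\delta Y_r+p(\ga_r-x)$ gives $\frac{1}{\ep}\int_t^{t+\ep}|Y_r-y|dr\le\sup_{[t,t+\ep]}|\delta Y_r|+|p|\sup_{[t,t+\ep]}|\ga_r-x|\to0$, and $Z_r-\sigma^*(t,x)p=\delta Z_r+(p\sigma(r,\ga_r)-\sigma^*(t,x)p)$ together with the Cauchy--Schwarz inequality in $r$ turns the factor $\frac{1}{\ep}$ into $\frac{1}{\sqrt\ep}$, so that $\frac{1}{\sqrt\ep}\|\delta Z\|_{L^2(t,t+\ep)}=\frac{1}{\sqrt\ep}O(\ep)\to0$ while $(\frac{1}{\ep}\int_t^{t+\ep}|p\sigma(r,\ga_r)-\sigma^*(t,x)p|^2dr)^{1/2}\to0$ by the right continuity (H3) of $\sigma(\cdot,x)$, the continuity of $\ga$ and dominated convergence. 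This yields the required $L^2$-bound for the difference, hence the equivalence of (1) and (2). The main obstacle is exactly this $Z$-control: the only quantitative input is the $L^2(L^2)$ estimate of Theorem \ref{th2}, so one must first remove the non-vanishing part $\bar Z=p\sigma(\cdot,\ga)$ through the reference comparison and then balance the gain $\|\delta Z\|=O(\ep)=o(\sqrt\ep)$ against the singular factor $1/\ep$.
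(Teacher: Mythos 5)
Your proposal is correct and follows essentially the same route as the paper: you shift by the reference pair $\bar Y_r=y+p(\ga_r-x)$, $\bar Z_r=\sigma^*(r,\ga_r)p$, identify $(\delta Y,\delta Z)$ as the transposition solution of the shifted BSDE with zero terminal value, invoke the a priori estimate of Theorem \ref{th2} to get $E\bigl[\sup_r|\delta Y_r|^2+\int_t^{t+\ep}|\delta Z_r|^2dr\bigr]=O(\ep^2)$, and then balance the singular factor $1/\ep$ against this bound via Cauchy--Schwarz in $r$ --- exactly the paper's treatment of its error terms $R_t^\ep$, $Q_t^\ep$, $P_t^\ep$. The one genuine (and welcome) difference is the endgame: the paper stays in the duality formulation and extracts the $L^q$ norm by testing against $\eta=\bigl(\frac{1}{\ep}\widetilde Y^\ep_t-g(t,y,\sigma^*(t,x)p)-pb(t,x)\bigr)^{q-1}$, which requires verifying $\eta\in L^2_{\F_t}$ and a H\"older argument, whereas you first convert the transposition identity (with $u=v=0$, $X\equiv\eta$) into the conditional-expectation representation $Y_t=E[Y_{t+\ep}+\int_t^{t+\ep}g(r,Y_r,Z_r)dr\mid\F_t]$ and then use the $L^2$-contraction of conditional expectation together with $\|\cdot\|_{L^q}\le\|\cdot\|_{L^2}$. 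This shows the \emph{difference} of the two quantities in (1) and (2) tends to $0$ in $L^2$, which yields both implications simultaneously for every $1\le q\le 2$ and dispenses with the auxiliary test variable; it is a mild but real simplification of the paper's argument, at no loss of generality since the lemma only concerns $q\le 2$.
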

\begin{proof}[\textup{\bf Proof}]
We only prove $``(2)\Rightarrow(1)"$,  $``(1)\Rightarrow(2)"$ is similar. For notational convenience, in this proof, we will write $\lt(\Y_s,\Z_s\rt)_{s\in[0,t+\ep]}$ instead of
{\small$\lt(Y_s\lt(g,t+\ep,y+p(\ga_{t+\ep}-x)\rt),Z_s\lt(g,t+\ep,y+p(\ga_{t+\ep}-x)\rt)\rt)_{s\in[0,t+\ep]}$    }.

By the classical results on SDEs, the terminal condition of the BSDE (\ref{rep-bsde}) is   square integrable and then $(\Y_s,\Z_s)_{s\in[0,t+\ep]}\in L^2_{\bF}(\CW,D([0,t+\ep];\R))\times L^2_{\bF}(\CW,L^2(0,t+\ep;\R^d))$ is the  unique  transposition solution of this BSDE, that is
\begin{equation}\label{repeq1}
E\lt[\Y_{t+\ep} X_{t+\ep}+\int_s^{t+\ep}X_rg(r,\Y_r,\Z_r)dr\rt]=E\lt[\Y_s\eta+\int_s^{t+\ep}u_r\Y_rdr+\int_s^{t+\ep}v_r\Z_rdr\rt], \text{ for all }  s\in[0,t+\ep],
\end{equation}
where  $X_{\cdot} \in L^2_{\bF}(\CW,C([s,t+\ep];\R))$ is the unique strong solution of the following SDE with any fixed $ u_{\cdot}\in L^2_{\bF}(\CW,L^1(s,t+\ep;\R)),v_{\cdot}\in L^2_{\bF}(\CW,L^2(s,t+\ep;\R^d)) $ and $\eta\in L^2_{\F_s}(\CW,\R)$,
\begin{equation*}
\left\{\begin{aligned}
&dX_r=u_rdr+v_rdW_r,\quad r\in[s,t+\ep]\\
&X_s=\eta\\
\end{aligned}\right..
\end{equation*}
For each $s\in[t,t+\ep]$,   define
$$\YY_s=\Y_s-\lt(y+p\lt(\ga_s-x\rt)\rt)\quad\text{ and } \quad\ZZ_s=\Z_s-\sigma^*(s,\ga_s)p.$$
Putting them into (\ref{repeq1}) and noticing that $\YY_{t+\epsilon}=0$, we have
\begin{align}\label{repeq2-0}
&E\lt[\lt(y+p\lt(\ga_{t+\ep}-x\rt)\rt)X_{t+\ep}+\int_s^{t+\ep}X_rg\lt(r,\YY_r+\lt(y+p(\ga_r-x)\rt),\ZZ_r+\sigma^*(r,\ga_r)p\rt)dr\rt]\nonumber\\
=&E\lt[\YY_s\eta+\lt(y+p\lt(\ga_s-x\rt)\rt)\eta+\int_s^{t+\ep}u_r\Y_rdr+\int_s^{t+\ep}v_r\Z_rdr\rt], \text{ for all }  s\in[t,t+\ep].
\end{align}
Using It\^{o}'s formula, we have
\begin{align*}
&\lt(y+p\lt(\ga_{t+\ep}-x\rt)\rt)X_{t+\ep}-\lt(y+p\lt(\ga_s-x\rt)\rt)\eta\\
=&\int_s^{t+\ep}X_rpb(r,\ga_r)dr+\int_s^{t+\ep}X_r\sigma^*(r,\ga_r)pdW_r+\int_s^{t+\ep}\lt(y+p\lt(\ga_r-x\rt)\rt)u_rdr\\
&+\int_s^{t+\ep}\lt(y+p\lt(\ga_r-x\rt)\rt)v_rdW_r+\int_s^{t+\ep}\sigma^*(r,\ga_r)pv_rdr\\
=&\int_s^{t+\ep}X_rpb(r,\ga_r)dr+\int_s^{t+\ep}\lt(\Y_r-\YY_r\rt)u_rdr+\int_s^{t+\ep}\lt(\Z_r-\ZZ_r\rt)v_rdr\\
&+\int_s^{t+\ep}X_r\sigma^*(r,\ga_r)pdW_r+\int_s^{t+\ep}\lt(y+p\lt(\ga_r-x\rt)\rt)v_rdW_r.
\end{align*}
Combining with (\ref{repeq2-0}), we have
\begin{align}\label{repeq2}
&E\lt[\int_s^{t+\ep}X_r\lt[g\lt(r,\YY_r+(y+p(\ga_r-x)),\ZZ_r+\sigma^*(r,\ga_r)p\rt)+pb(r,\ga_r)\rt]dr\rt]\nonumber\\
=&E\lt[\YY_s\eta+\int_s^{t+\ep}u_r\YY_rdr+\int_s^{t+\ep}v_r\ZZ_rdr\rt], \text{ for all }  s\in[t,t+\ep].
\end{align}
That is $(\YY_s,\ZZ_s)_{s\in[t,t+\ep]}\in L^2_{\bF}(\CW,D([t,t+\ep];\R))\times L^2_{\bF}(\CW,L^2(t,t+\ep;\R^d))$ is the  unique  transposition solution of the BSDE
\begin{equation*}
\left\{\begin{aligned}
&d\YY_s=-\lt[g\lt(s,\YY_s+(y+p(\ga_s-x)),\ZZ_s+\sigma^*(s,\ga_s)p\rt)+pb(s,\ga_s)\rt]ds+\ZZ_sdW_s,\quad s\in[t,t+\ep]\\
&\YY_{t+\ep}=0\\
\end{aligned}\ .\right.
\end{equation*}
Thus, by Theorem \ref{th2}, we have the following estimate,
\begin{align*}
E\lt[\sup\limits_{t\leq r\leq t+\ep}|\YY_r|^2+\int_{t}^{t+\ep}|\ZZ_r|^2dr\rt]\leq C_{K}E\lt[\lt(\int_t^{t+\ep}\lt|g\lt(r,y+p(\ga_r-x),\sigma^*(r,\ga_r)p\rt)+pb(r,\ga_r)\rt|dr\rt)^2\rt]
\end{align*}
where $C_{K}$ is a constant depend on $K$.  It can be deduced from (A1) and (H2) that
\begin{align}\label{estim2}
E\lt[\sup\limits_{t\leq r\leq t+\ep}|\YY_r|^2+\int_{t}^{t+\ep}|\ZZ_r|^2dr\rt]\leq C_{x,y,p}\ep^2E\lt[1+\sup\limits_{t\leq r\leq t+\ep}\lt(|\ga_r|^2+|g(r,0,0)|^2\rt)\rt]
\end{align}
where constant $C_{x,y,z}$ depends on $x,y,p,K,L_2$.

In (\ref{repeq2}), let $s=t$, and $u_r=0,v_r=0$, for all $r\in[t,t+\ep]$, we have  \begin{align*}
E\lt[\eta\int_t^{t+\ep}\lt[g\lt(r,\YY_r+(y+p(\ga_r-x)),\ZZ_r+\sigma^*(r,\ga_r)p\rt)+pb(r,\ga_r)\rt]dr\rt]=E\lt[\YY_t\eta\rt].
\end{align*}
Further more, we have
\begin{align}\label{repeq3}
&E\lt[\lt(\frac{1}{\ep}\YY_t-g(t,y,\sigma^*(t,x) p)-pb(t,x)\rt)\eta\rt]\nonumber\\
=&E\lt[\frac{\eta}{\ep}\int_t^{t+\ep}\lt[g\lt(r,\YY_r+(y+p(\ga_r-x)),\ZZ_r+\sigma^*(r,\ga_r)p\rt)-g(t,y,\sigma^*(t,x) p)+pb(r,\ga_r)-pb(t,x)\rt]dr\rt]\nonumber\\
=:&E\lt[R_t^{\ep}+Q_t^{\ep}+P_t^{\ep}+\eta \lt(E\lt[\frac{1}{\ep}\int_t^{t+\ep}g(r,y,\sigma^*(t,x) p)dr\big|\mathcal{F}_t\rt]-g(t,y,\sigma^*(t,x) p)\rt)\rt],
\end{align}
where $R_t^{\ep},Q_t^{\ep},P_t^{\ep}$ are denoted by
\begin{align*}
&R_t^{\ep}:=\frac{\eta}{\ep}\int_t^{t+\ep}\lt[g\lt(r,\YY_r+(y+p(\ga_r-x)),\ZZ_r+\sigma^*(r,\ga_r)p\rt)-g\lt(r, (y+p(\ga_r-x)), \sigma^*(r,\ga_r)p\rt)\rt]dr,\\
&Q_t^{\ep}:=\frac{\eta}{\ep}\int_t^{t+\ep}\lt[g\lt(r,(y+p(\ga_r-x)),\sigma^*(r,\ga_r)p\rt)-g(r,y,\sigma^*(r,x)p)+pb(r,\ga_r)-pb(r,x)\rt]dr,\\
&P_t^{\ep}:=\frac{\eta}{\ep}\int_t^{t+\ep}\lt[g(r,y,\sigma^*(r,x)p)-g(r,y,\sigma^*(t,x) p)+pb(r,x)-pb(t,x)\rt]dr.
\end{align*}

For a fixed $1< q\leq2$, now let $\eta=\lt(\frac{1}{\ep}\YY_t-g(t,y,\sigma^*(t,x) p)-pb(t,x)\rt)^{q-1}$, it will be checked that $\eta\in L^2_{\mathcal{F}_t}(\Omega,\mathbb{R})$. Due to assumption (A1),(H2) and estimate (\ref{estim2}), we have
\begin{align*}
\lt(E[|\eta|^2]\rt)^{\frac{1}{q-1}}\leq&E\lt[\lt|\frac{1}{\ep}\YY_t-g(t,y,\sigma^*(t,x) p)-pb(t,x)\rt|^2\rt]\\
\leq&\frac{3}{\ep^2}E\lt[\sup\limits_{t\leq r\leq t+\ep}|\YY_r|^2\rt]+6E\lt[\sup\limits_{0\leq r\leq T}|g(r,0,0)|^2\rt]+6K^2(|y|+|\sigma^*(t,x) p|)^2+3|pb(t,x)|^2\\
\leq &C_{x,y,p}'E\lt[1+\sup\limits_{0\leq r\leq T}|\ga_r|^2+\sup\limits_{0\leq r\leq T}|g(r,0,0)|^2\rt].
\end{align*}
where constant $C'_{x,y,z}$ depends on $x,y,p,K,L_2$.  Combining with the assumption (A3)  and the classical result of SDEs (\cite[Theorem 1.6.3]{YongZhou})
$$E\lt[\sup\limits_{0\leq r\leq T}|\ga_r|^2\rt]\leq C_T(1+|x|^2),\quad C_T>0\text{ depend on }T,$$
we have $E[|\eta|^2]<\infty$ and $\eta\in L^2_{\mathcal{F}_t}(\Omega,\mathbb{R})$.

It follows from $\eta=\lt(\frac{1}{\ep}\YY_t-g(t,y,\sigma^*(t,x) p)-pb(t,x)\rt)^{q-1}$ and (\ref{repeq3}) that
\begin{align*}
&E\lt[\lt|\frac{1}{\ep}(\Y_t-y)-g(t,y,\sigma^*(t,x) p)-pb(t,x)\rt|^q\rt]\\
=&E\lt[\lt|\lt(\frac{1}{\ep}\YY_t-g(t,y,\sigma^*(t,x) p)-pb(t,x)\rt)\eta\rt|\rt]\\
\leq&E \lt[|R_t^{\ep}|+|Q_t^{\ep}|+|P_t^{\ep}|\rt]+\lt(E\lt[|\eta|^{\frac{q}{q-1}}\rt]\rt)^{\frac{q-1}{q}} \lt(E\lt[\lt|E\lt[\frac{1}{\ep}\int_t^{t+\ep}g(r,y,\sigma^*(t,x) p)dr\big|\mathcal{F}_t\rt]-g(t,y,\sigma^*(t,x) p)\rt|^q\rt]\rt)^{\frac{1}{q}}.
\end{align*}
So, in order to deduce statement (1) from statement (2), we only need to prove $E[|R_t^{\ep}|+|Q_t^{\ep}|+|P_t^{\ep}|]\to0$ as $\ep\to0^+$.

Due to the assumption (A1) and the estimate (\ref{estim2}), with the H\"{o}lder's inequality, we have
\begin{align*}
\lt(E[|R_t^{\ep}|]\rt)^2\leq& \frac{K^2}{\ep^2}E\lt[|\eta|^2\rt]E\lt[\lt(\int_t^{t+\ep}(|\YY_r|+|\ZZ_r|)dr\rt)^2\rt]\\
\leq&\frac{2K^2}{\ep}E\lt[|\eta|^2\rt]E\lt[\ep\sup\limits_{t\leq r\leq t+\ep}|\YY_r|^2+\int_{t}^{t+\ep}|\ZZ_r|^2dr\rt]\\
\leq&\frac{2K^2}{\ep}E\lt[|\eta|^2\rt]E\lt[\sup\limits_{t\leq r\leq t+\ep}|\YY_r|^2+\int_{t}^{t+\ep}|\ZZ_r|^2dr\rt]\\
\leq&2K^2C_{x,y,p}\ep E\lt[|\eta|^2\rt]E\lt[1+\sup\limits_{t\leq r\leq t+\ep}\lt(|\ga_r|^2+|g(r,0,0)|^2\rt)\rt],
\end{align*}
With the fact that $E\lt[|\eta|^2\rt]<\infty$ and $E\lt[\sup\limits_{0\leq r\leq T}\lt(|\ga_r|^2+|g(r,0,0)|^2\rt)\rt]<\infty$,  we have $E[|R_t^{\ep}|]\to0$ as $\ep\to0^+$.

Similarly, it follows from assumptions (A1),(H1) and H\"{o}lder's inequality that
\begin{align*}
\lt(E[|Q_t^{\ep}|]\rt)^2\leq&\frac{2}{\ep^2}E\lt[|\eta|^2\rt]E\lt[ \lt(\int_t^{t+\ep}(K+KL_1+L_1)\lt|p(\ga_r-x)\rt|dr\rt)^2\rt]\\
\leq& C_{p,K,L_1}E\lt[|\eta|^2\rt]E\lt[\frac{1}{\ep}\int_t^{t+\ep}|\ga_r-x|^2dr\rt],\quad C_{p,K,L_1}>0\text{ depends on }p,K\text{ and }L_1.
\end{align*}
Since the function $r\mapsto E[|\ga_r-x|^2]$ is continuous (\cite[Theorem 1.6.3]{YongZhou} ), and this function is equal to $0$ at time $t$, we get $E[|Q_t^{\ep}|]\to0$ as $\ep\to0^+$.

By H\"{o}lder inequality, we also have
\begin{align*}
\lt(E[|P_t^{\ep}|]\rt)^2\leq& E\lt[|\eta|^2\rt]\frac{1}{\ep}E\lt[\int_t^{t+\ep}\lt|g(r,y,\sigma(r,x)p)-g(r,y,\sigma^*(t,x) p)+pb(r,x)-pb(t,x)\rt|^2dr\rt]\\
\leq&E\lt[|\eta|^2\rt]\frac{2|p|^2}{\ep}E\lt[\int_t^{t+\ep}K^2|\sigma(r,x)-\sigma(t,x)|^2+|b(r,x)-b(t,x)|^2dr\rt]
\end{align*}
With the assumption (H3), $\frac{1}{\ep}\int_t^{t+\ep}\lt(K^2|\sigma(r,x)-\sigma(t,x)|^2+|b(r,x)-b(t,x)|^2\rt)dr\to0$ as $\ep\to0^+$ $P$-a.s., further more, by assumption (H2) we have,
$$\frac{1}{\ep}\int_t^{t+\ep}\lt(K^2|\sigma(r,x)-\sigma(t,x)|^2+|b(r,x)-b(t,x)|^2\rt)dr\leq C_{K,L_2}\lt(1+|x|^2\rt)<\infty, $$
where $C_{K,L_2}>0$ depends on $K$ and $L_2$. Thus $E[|P_t^{\ep}|]\to0$ as $\ep\to0^+$, follows from Lebesgues dominated theorem.

 For the case $q=1$, we just let $\eta=1$, the other proof is similarly as the case $1<q\leq 2$.  The proof is completed.
\end{proof}
\begin{lem}[Lebeshue Lemma, see \cite{Hewitt} Lemma 18.4]\label{rep-lem2}
Let $f$ be a Lebesgue integrable function on the interval $[a,b]$. Then there exists a set $S\subseteq]a,b[$ such that $\mu([a,b]\setminus S)=0$, and for every $\alpha\in\R,t\in S$, we have
$$\lim\limits_{\ep\to0^+}\frac{1}{\ep}\int_{t}^{t+\ep}|f(s)-\alpha|=|f(t)-\alpha|,$$
where $\mu$ denotes the Lebesgue measure.
\end{lem}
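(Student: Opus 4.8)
The plan is to reduce the claim to the classical Lebesgue differentiation theorem applied to countably many shifts of $f$, and then to upgrade from a countable dense family of values of $\alpha$ to all of $\R$ by exploiting that $\alpha\mapsto|f(s)-\alpha|$ is $1$-Lipschitz with a constant independent of $s$. The point of the lemma is \emph{not} the convergence for a single $\alpha$, which is immediate, but the uniformity: a \emph{single} full-measure set $S$ must serve simultaneously for every $\alpha\in\R$.

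First I would fix an enumeration of the rationals and, for each $q\in\mathbb{Q}$, apply the Lebesgue differentiation theorem (differentiation of the integral) to the integrable function $s\mapsto|f(s)-q|$, integrable because $|f(s)-q|\le|f(s)|+|q|$. This yields a set $S_q\subseteq\,]a,b[$ with $\mu(]a,b[\setminus S_q)=0$ such that $\frac{1}{\ep}\int_t^{t+\ep}|f(s)-q|\,ds\to|f(t)-q|$ for every $t\in S_q$. Intersecting further with the full-measure set on which $f$ is finite and setting $S:=\bigcap_{q\in\mathbb{Q}}S_q$, a countable intersection of full-measure sets, we get $\mu([a,b]\setminus S)=0$, and the conclusion holds for every \emph{rational} $\alpha$ at every $t\in S$.

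Next I would pass from rationals to arbitrary $\alpha\in\R$ by a triangle-inequality estimate. Fix $t\in S$ and $\alpha\in\R$; given $\delta>0$ choose $q\in\mathbb{Q}$ with $|\alpha-q|<\delta$. The pointwise bound $\bigl||f(s)-\alpha|-|f(s)-q|\bigr|\le|\alpha-q|<\delta$ controls the gap between the two averaged integrals uniformly in $\ep$, and the same bound controls $\bigl||f(t)-\alpha|-|f(t)-q|\bigr|$. Hence
$$\lt|\frac{1}{\ep}\int_t^{t+\ep}|f(s)-\alpha|\,ds-|f(t)-\alpha|\rt|\le\delta+\lt|\frac{1}{\ep}\int_t^{t+\ep}|f(s)-q|\,ds-|f(t)-q|\rt|+\delta.$$
Taking $\limsup_{\ep\to0^+}$ and using $t\in S_q$ kills the middle term, leaving a bound of $2\delta$; since $\delta$ is arbitrary, the limit equals $|f(t)-\alpha|$ exactly.

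The only step requiring care is the uniformity in $\ep$ of this approximation: because the Lipschitz constant of $\alpha\mapsto|f(s)-\alpha|$ is $1$ independently of $s$, replacing $\alpha$ by a nearby rational costs at most $\delta$ \emph{for every} $\ep$ at once, so the estimate survives the passage to the limit. This is precisely the mechanism that converts ``for each $\alpha$, almost every $t$'' into ``almost every $t$, for all $\alpha$,'' which is the entire content of the lemma beyond the standard differentiation theorem.
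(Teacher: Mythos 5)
Your proof is correct and is the standard argument for this fact: apply the Lebesgue differentiation theorem to $s\mapsto|f(s)-q|$ for each rational $q$, intersect the countably many full-measure sets, and use the uniform $1$-Lipschitz dependence on $\alpha$ to pass from rational to arbitrary $\alpha$. The paper itself gives no proof — it cites Hewitt--Stromberg, Lemma 18.4 — and the proof there is essentially the one you wrote, so there is nothing to reconcile.
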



\begin{proof}[{\bf Proof of Theorem \ref{rep}}] (1).
With the assumptions (A1) and (H2), for any $(t,x,y,p)\in[0,T]\times \R^m\times\R\times\R^m$ we have
$$
|g(t,y,\sigma^*(t,x) p)|\leq |g(t,0,0)|+K|y|+KL_2(1+|x|)|p|.
$$
Combining with the assumption (A3), we have $\{|\frac{1}{\ep}\int_t^{t+\ep}g(r,y,\sigma^*(t,x) p)dr|^2;\ep>0\}$ are uniformly integrable. It also can be checked that $|\int_0^Tg(r,y,\sigma^*(t,x) p)dr|<\infty, P$-a.s.. Follows from Lemma \ref{rep-lem2}, there exists a set $S\subseteq[0,T[$ such that $\mu([0,T]\setminus S)=0$, and for any $t\in S$ we have
$$\lim\limits_{\ep\to0^+}\frac{1}{\ep}\int_t^{t+\ep}g(r,y,\sigma^*(t,x) p)dr=g(t,y,\sigma^*(t,x) p),\ P\text{-a.s.}$$
Then we have, for any $t\in S$
$$g(t,y,\sigma^*(t,x) p)=\lim\limits_{\ep\to0^+}\frac{1}{\ep}\int_t^{t+\ep}g(r,y,\sigma^*(t,x) p)dr,\ \text{in }L^2$$
By Jensen's inequality we know that
\begin{align*}
&E\lt[\lt|E\lt[\frac{1}{\ep}\int_t^{t+\ep}g(r,y,\sigma^*(t,x) p)dr\big|\mathcal{F}_t\rt]-g(t,y,\sigma^*(t,x) p)\rt|^2\rt]\\
\leq&E\lt[\lt|\frac{1}{\ep}\int_t^{t+\ep}g(r,y,\sigma^*(t,x) p)dr-g(t,y,\sigma^*(t,x) p)\rt|^2\rt]
\end{align*}
Then, by Lemma \ref{rep-lem1}, we can finish  the proof of (1) in Theorem \ref{rep}.

(2). With the assumption (A4), for any $t\in[0,T[$, $\frac{1}{\ep}\int_t^{t+\ep}\lt|g(r,y,\sigma^*(t,x)p)-g(t,y,\sigma^*(t,x)p)\rt|^2dr\to0\ P$-a.s. as $\ep\to0^+$, further more,
$$\frac{1}{\ep}\int_t^{t+\ep}\lt|g(r,y,\sigma^*(t,x)p)-g(t,y,\sigma^*(t,x)p)\rt|^2dr\leq 2\lt(\sup\limits_{0\leq r\leq T}|g(r,0,0)|^2+K^2|y|^2+K^2L_2^2(1+|x|)^2|p|^2\rt)<\infty, $$
Follows from Lebesgues dominated theorem, we have
 \begin{align*}
&E\lt[\lt|E\lt[\frac{1}{\ep}\int_t^{t+\ep}g(r,y,\sigma^*(t,x) p)dr\big|\mathcal{F}_t\rt]-g(t,y,\sigma^*(t,x) p)\rt|^2\rt]\\
\leq&E\lt[\lt|\frac{1}{\ep}\int_t^{t+\ep}g(r,y,\sigma^*(t,x) p)dr-g(t,y,\sigma^*(t,x) p)\rt|^2\rt]\to0\quad \ep\to0^+.
\end{align*}
Then, by Lemma \ref{rep-lem1}, we can finish  the proof of (2) in Theorem \ref{rep}.
\end{proof}
\begin{coro}\label{rep-cor}
If $g$ satisfies assumptions (A1) and (A3), then, for any  $(y,z)\in\R\times\R^d$,    we have
\begin{equation}\label{coro}
g(t,y,z)=\lim\limits_{\ep\to0^+}\frac{1}{\ep}\lt[Y_t\lt(g,t+\ep,y+z(W_{t+\ep}-W_t)\rt)-y\rt]\quad \text{in }L^2.
\end{equation}
holds for almost every $t\in[0,T[$.

If further $g$ satisfies assumption (A4), then  (\ref{coro})
holds  for any $(t,y,z)\in[0,T[\times\R\times\R^d$.
\end{coro}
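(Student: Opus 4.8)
The plan is to obtain Corollary \ref{rep-cor} as an immediate specialization of Theorem \ref{rep}, choosing the coefficients of the forward SDE so that the diffusion $\ga$ degenerates into a shifted Brownian motion. Concretely, I would take $m=d$, the drift $b\equiv 0$, and the diffusion coefficient $\sigma\equiv I_d$, the $d\times d$ identity matrix, and fix $x=0$ for concreteness (its value is irrelevant since the coefficients are constant in $x$). Because $b$ and $\sigma$ are constant, they are trivially Lipschitz, of linear growth, and right continuous in $t$, so assumptions (H1)--(H3) hold (with $L_1=0$ and $L_2=\sqrt{d}$). With these coefficients the forward equation reduces to $\ga_s=x+\int_t^s dW_r=x+(W_s-W_t)$ for $s\ge t$, and hence $\ga_{t+\ep}-x=W_{t+\ep}-W_t$, which is precisely the Brownian increment appearing in the terminal value of (\ref{coro}).

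Next I would match the remaining free parameters $y$ and $p$. Since $\sigma^*(t,x)=I_d$, we have $\sigma^*(t,x)p=p$, so the choice $p=z\in\R^d$ yields $\sigma^*(t,x)p=z$; moreover $pb(t,x)=p\cdot 0=0$. Substituting into the terminal condition of the BSDE (\ref{rep-bsde}), the value $y+p(\ga_{t+\ep}-x)$ becomes exactly $y+z(W_{t+\ep}-W_t)$, so the transposition solution appearing on the right-hand side of (\ref{repthm}) coincides with the one in (\ref{coro}). On the left-hand side of (\ref{repthm}), the two terms collapse to $g(t,y,\sigma^*(t,x)p)+pb(t,x)=g(t,y,z)$.

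With these identifications in place, part (1) of Theorem \ref{rep}---valid for almost every $t\in[0,T[$ under (A1) and (A3)---delivers the first assertion of the corollary in $L^2$, while part (2)---valid for every $t\in[0,T[$ once (A4) is additionally imposed---delivers the second assertion. I do not anticipate any genuine obstacle: the entire argument is a parameter substitution, and the only points requiring a line of verification are that the constant coefficients satisfy (H1)--(H3) and the dimensional bookkeeping $\sigma^*(t,x)p=p=z$ with $pb(t,x)=0$, after which the corollary follows directly from the theorem.
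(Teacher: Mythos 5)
Your proposal is correct and is precisely the intended derivation: the paper states Corollary \ref{rep-cor} without proof as an immediate specialization of Theorem \ref{rep}, obtained exactly by taking $m=d$, $b\equiv0$, $\sigma\equiv I_d$ and $p=z$, so that $\ga_{t+\ep}-x=W_{t+\ep}-W_t$, $\sigma^*(t,x)p=z$ and $pb(t,x)=0$. Your verification that the constant coefficients satisfy (H1)--(H3) is the only checking needed, and you have done it.
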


Since the representation theorem holds for almost every $t\in[0,T]$, from now on, for any $(y,z)\in\R\times\R^d$ we denote $S_{y,z}(g)$ the set of $t$ that makes the representation theorem of $g$ hold at $(y,z)$, that is,
$$S_{y,z}(g):=\lt\{t\in[0,T[\ \lt|\ g(t,y,z)=\lim\limits_{\ep\to0^+}\frac{1}{\ep}\lt[Y_t\lt(g,t+\ep,y+z(W_{t+\ep}-W_t)\rt)-y\rt]\quad  \text{in }L^2\rt.\rt\}$$

We are now in a position to present a converse comparison  theorem for transposition solutions of BSDEs which extends the Theorem 4.1 in \cite{Briand} with weaker condition to the general filtration case.

\begin{thm}\label{conversethm} Let $g_1,g_2$ be two functions satisfied assumptions (A1),(A3). If for all $T'\in[0,T]$, $\xi\in L^2_{\mathcal{F}_{T'}}$
 $$ Y_t(g_1,T',\xi)\leq Y_t(g_2,T',\xi), t\in[0,T']$$
then,  for all  $(y,z)\in\R\times\R^d$,
$$g_1(t,y,z)\leq g_2(t,y,z),  \quad dP\times dt\text{-a.s.}$$

If further, $g_1,g_2$ satisfy assumption (A4), then we have $P$-a.s.,  for all  $(t,y,z)\in[0,T]\times\R\times\R^d$, $g_1(t,y,z)\leq g_2(t,y,z).$
\end{thm}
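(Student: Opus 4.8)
The plan is to read the result off the representation of the generator given in Corollary~\ref{rep-cor} (essentially the special case $b\equiv0$, $\sigma\equiv\mathrm{Id}$ of Theorem~\ref{rep}), combined with the assumed ordering of the transposition solutions.

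First I would fix $(y,z)\in\R\times\R^d$. For $t\in[0,T[$ and $\ep>0$ small enough that $t+\ep\le T$, put $\xi_\ep:=y+z(W_{t+\ep}-W_t)$; since $W_{t+\ep}-W_t$ is $\F_{t+\ep}$-measurable and square integrable, $\xi_\ep\in L^2_{\F_{t+\ep}}(\CW,\R)$. Applying the hypothesis with $T'=t+\ep$ and terminal value $\xi_\ep$ gives
$$Y_t\lt(g_1,t+\ep,\xi_\ep\rt)\le Y_t\lt(g_2,t+\ep,\xi_\ep\rt)\quad P\text{-a.s.}$$
Subtracting $y$, dividing by $\ep$ and sending $\ep\to0^+$, the two sides converge in $L^2$ to $g_1(t,y,z)$ and $g_2(t,y,z)$ respectively by Corollary~\ref{rep-cor}. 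Since $L^2$-convergence preserves almost-sure inequalities (extract an a.s.-convergent subsequence), I obtain $g_1(t,y,z)\le g_2(t,y,z)$ $P$-a.s. for every $t\in S_{y,z}(g_1)\cap S_{y,z}(g_2)$; as the complement of this set in $[0,T[$ is Lebesgue-null, the inequality holds $dP\times dt$-a.s. for this fixed $(y,z)$, which is the first assertion.

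For the second assertion the only new issue is to remove the dependence of the exceptional set on $(y,z)$. Under the extra assumption (A4), the second part of Corollary~\ref{rep-cor} gives the representation for \emph{every} $t\in[0,T[$, so the computation above produces, for each fixed $(y,z)$, a single $P$-null set off which $g_1(t,y,z)\le g_2(t,y,z)$ for all $t\in[0,T[$. I would then take a countable dense set $D\subset\R\times\R^d$ (for instance $\mathbb{Q}\times\mathbb{Q}^d$) and let $N$ be the union of the corresponding null sets; $N$ is still $P$-null, and on $\CW\setminus N$ one has $g_1(t,y,z)\le g_2(t,y,z)$ for all $t\in[0,T[$ and all $(y,z)\in D$. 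For an arbitrary $(y,z)$ I would choose $(y_n,z_n)\in D$ with $(y_n,z_n)\to(y,z)$ and pass to the limit using the Lipschitz bound from (A1), $|g_i(t,y_n,z_n)-g_i(t,y,z)|\le K(|y_n-y|+|z_n-z|)$, concluding that on $\CW\setminus N$ the inequality holds for all $(t,y,z)\in[0,T[\times\R\times\R^d$.

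I expect the main obstacle to be precisely this last upgrade in part~(2): converting the family of $(y,z)$-indexed null sets into one uniform null set. This is where (A4) is essential, because it upgrades the representation from ``almost every $t$'' to ``every $t$'', so that after fixing a single null set via the countable set $D$ the continuity in (A1) can be used to sweep over all $(y,z)$ (and all $t$) simultaneously. The remaining steps---the measurability and square integrability of $\xi_\ep$, and the subsequence argument for the $L^2$ limits---are routine.
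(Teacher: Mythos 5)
Your argument is correct and follows essentially the same route as the paper's proof: apply the hypothesis to the terminal value $y+z(W_{t+\ep}-W_t)$ at horizon $t+\ep$, invoke Corollary~\ref{rep-cor} for the $L^2$ limits, pass to an a.s.-convergent subsequence to transfer the inequality, and use that $[0,T[\setminus\bigl(S_{y,z}(g_1)\cap S_{y,z}(g_2)\bigr)$ is Lebesgue-null. Your treatment of the second assertion (a single null set via a countable dense set of $(y,z)$ and the Lipschitz bound from (A1)) is in fact more explicit than the paper, which leaves that upgrade implicit.
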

\begin{proof}[{\bf Proof}]
For any fixed $(y,z)\in\times\R\times\R^d $ and $t\in S_{y,z}(g_1)\cap S_{y,z}(g_2)$, we have
\begin{equation*}
g_i(t,y,z)=\lim\limits_{n\to\infty}n\lt\{Y_t\lt(g_i,t+1/n,y+z(W_{t+1/n}-W_t)\rt)-y\rt\}\quad\text{ in }L^2, \quad i=1,2.
\end{equation*}
Then for any $t\in S_{y,z}(g_1)\cap S_{y,z}(g_2)$, there exists a subsequence $\{n_k\}_{k=1}^{\infty}$ of  $\{n\}_{n=1}^{\infty}$ such that,
\begin{equation*}
g_i(t,y,z)=\lim\limits_{k\to\infty}n_k\lt\{Y_t\lt(g_i,t+1/n_k,y+z(W_{t+1/n_k}-W_t)\rt)-y\rt\}\quad P\text{-a.s.}, \quad i=1,2.
\end{equation*}
On the other hand, by the hypothesis we deduce that
$$n_k\lt\{Y_t\lt(g_1,t+1/n_k,y+z(W_{t+1/n_k}-W_t)\rt)-y\rt\}\leq n_k\lt\{Y_t\lt(g_2,t+1/n,y+z(W_{t+1/n_k}-W_t)\rt)-y\rt\}\quad P\text{-a.s.}$$
Then for any $t\in S_{y,z}(g_1)\cap S_{y,z}(g_2)$, we obtain, the inequality $g_1(t,y,z)\leq g_2(t,y,z),  \ P\text{-a.s.}$

By Corollary \ref{rep-cor} we know that $\mu\lt([0,T]\setminus(S_{y,z}(g_1)\cap S_{y,z}(g_2))\rt)=0$, where $\mu$ denotes the Lebesgue measure. Then we have, for all  $(y,z)\in\R\times\R^d$,
$g_1(t,y,z)\leq g_2(t,y,z),\  dP\times dt\text{-a.s.}$
\end{proof}

\begin{thm}\label{th1}
Let $g$ be any given function satisfied assumptions (A1) and (A2).
\begin{enumerate}[(1).]
  \item\textup{\bf (Positive homogeneity)} If $g$ is positive homogeneous in $(y,z)$, that is, for all $(y,z)\in\R\times\R^d$ and $\alpha\geq0$,
      $$g(t,\alpha y,\alpha z)= \alpha g(t,y,z)\quad dP\times dt\text{-a.s.}$$
      Then for all $T'\in]0,T]$ any  $\xi\in L^2_{\F_{T'}}(\CW,\R)$,
      $$\lt(Y_{t}\lt(g,T',\alpha \xi\rt),Z_{t}\lt(g,T',\alpha \xi\rt)\rt)_{t\in[0,T']}=(\alpha Y_{t}\lt(g,T',\xi\rt),\alpha Z_{t}\lt(g,T',  \xi\rt))_{t\in[0,T']} $$
       in $ L^2_{\bF}(\CW,D([0,T'];\R))\times L^2_{\bF}(\CW,L^2(0,T';\R^{d})).$
  \item\textup{\bf (Translation invariance)} If $g$ is independent of $y$, then for any $T'\in[0, T], t\in[0,T']$ and $\xi\in L^2_{\F_{T'}}(\CW,\R),\beta\in L^2_{\F_t}(\CW,\R)$,
        $$\lt(Y_{s}\lt(g,T', \xi+\beta\rt),Z_{s}\lt(g,T', \xi+\beta\rt)\rt)_{s\in[t,T']}=( Y_{s}\lt(g,T',\xi\rt)+\beta, Z_{s}\lt(g,T',  \xi\rt))_{s\in[t,T']} $$
 in $ L^2_{\bF}(\CW,D([t,T'];\R))\times L^2_{\bF}(\CW,L^2(t,T';\R^{d})).$
  \item\textup{\bf (Sub-additivity)} If $g$ is sub-additive in $(y,z)$, that is,  for all $(y_1,z_1),(y_2,z_2)\in\R\times\R^d$,
      $$g(t,y_1+y_2,z_1+z_2)\leq g(t,y_1,z_1)+g(t,y_2,z_2)\quad dP\times dt\text{-a.s.}$$
      Then for any $T'\in[0, T]$ and   $ \xi_1,\xi_2\in L^2_{\F_{T'}}(\CW,\R)$,
      $$Y_{t}\lt(g,T', \xi_1+\xi_2\rt)\le Y_{t}\lt(g,T', \xi_1\rt)+Y_{t}\lt(g,T', \xi_2\rt)\quad \forall t\in[0, T'] \quad P\text{-a.s.}$$
  \item\textup{\bf (Convexity)} If $g$ is convex in $(y,z)$, that is, for all $(y_1,z_1),(y_2,z_2)\in\R\times\R^d$ and $\alpha\in[0,1]$,
      $$g(t,\alpha y_1+(1-\alpha)y_2,\alpha z_1+(1-\alpha)z_2)\leq \alpha g(t,y_1,z_1)+(1-\alpha)g(t,y_2,z_2)\quad dP\times dt\text{-a.s.}$$
     Then for any $T'\in[0, T]$ and   $\xi_1,\xi_2\in L^2_{\F_{T'}}(\CW,\R)$,
   $$Y_{t}\lt(g,T', \alpha\xi_1+(1-\alpha)\xi_2\rt)\le \alpha Y_{t}\lt(g,T', \xi_1\rt)+(1-\alpha)Y_{t}\lt(g,T', \xi_2\rt)\quad \forall t\in[0, T'] \quad P\text{-a.s.}$$
\end{enumerate}
\end{thm}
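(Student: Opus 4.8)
The plan is to prove all four parts by reducing each to a tool already available for transposition solutions: the uniqueness statement of Theorem \ref{th2} for parts (1)--(2), and the comparison theorem (Theorem \ref{comparison}) for parts (3)--(4). The common engine is that the defining identity (\ref{transp}) is linear in the pair $(Y,Z)$ and in the terminal datum, so scalar multiples and sums of transposition solutions are again transposition solutions of related BSDEs. For positive homogeneity (1), I would fix $\alpha\ge0$, let $(Y,Z)=(Y(g,T',\xi),Z(g,T',\xi))$, and multiply the identity (\ref{transp}) by $\alpha$; using $\alpha g(r,Y_r,Z_r)=g(r,\alpha Y_r,\alpha Z_r)$ $dP\times dt$-a.s.\ shows that $(\alpha Y,\alpha Z)$ satisfies (\ref{transp}) for the BSDE with terminal $\alpha\xi$. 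Since $(\alpha Y,\alpha Z)$ lies in the correct spaces, the uniqueness in Theorem \ref{th2} forces $(\alpha Y,\alpha Z)=(Y(g,T',\alpha\xi),Z(g,T',\alpha\xi))$. The degenerate case $\alpha=0$ is included, since homogeneity gives $g(t,0,0)=0$, whence the solution with null terminal is $(0,0)$.

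For translation invariance (2), I would set $(Y,Z)=(Y(g,T',\xi),Z(g,T',\xi))$ and test whether $(\widetilde Y,\widetilde Z):=(Y+\beta,Z)$ is the transposition solution with terminal $\xi+\beta$ on $[t,T']$, working with the localized identity of Proposition \ref{prop3} on subintervals $[s,\tau]\subseteq[t,T']$. Because $g$ is independent of $y$, substituting $Y_r=\widetilde Y_r-\beta$ reduces the identity for $(\widetilde Y,\widetilde Z)$ to the one for $(Y,Z)$ plus the single extra requirement $E\lt[\beta X_\tau\rt]=E\lt[\beta\eta+\int_s^\tau u_r\beta\,dr\rt]$. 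Writing $X_\tau=\eta+\int_s^\tau u_r\,dr+\int_s^\tau v_r\,dW_r$, this holds precisely because $\beta$ is $\F_t$-measurable and $s\ge t$, so $\beta$ is $\F_s$-measurable and $E\lt[\beta\int_s^\tau v_r\,dW_r\rt]=0$; this is exactly where the restriction $s\in[t,T']$ is needed. Uniqueness on $[t,T']$ then concludes.

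For sub-additivity (3) and convexity (4), I would use the comparison theorem. Writing $(Y^i,Z^i)$ for the solutions with terminals $\xi_i$, summing (resp.\ taking the $\alpha$-convex combination of) their identities (\ref{transp}) shows that $\overline Y:=Y^1+Y^2$ (resp.\ $\alpha Y^1+(1-\alpha)Y^2$), with the matching $\overline Z$, is the transposition solution of a BSDE with terminal $\xi_1+\xi_2$ (resp.\ $\alpha\xi_1+(1-\alpha)\xi_2$) and frozen generator $g^*(r):=g(r,Y^1_r,Z^1_r)+g(r,Y^2_r,Z^2_r)$ (resp.\ $\alpha g(r,Y^1_r,Z^1_r)+(1-\alpha)g(r,Y^2_r,Z^2_r)$), which is constant in $(y,z)$ and admissible. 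Sub-additivity (resp.\ convexity) then gives $g(r,\overline Y_r,\overline Z_r)\le g^*(r)=g^*(r,\overline Y_r,\overline Z_r)$ $dP\times dt$-a.s., so applying Theorem \ref{comparison} with $g^*$ as the dominating generator and the common terminal yields $\overline Y_t\ge Y_t(g,T',\xi_1+\xi_2)$ (resp.\ the convex analogue), which is the claimed inequality.

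The steps I expect to be delicate are: in part (2), correctly isolating and annihilating the stochastic-integral term, which is what forces the $[t,T']$ restriction through the $\F_t$-measurability of $\beta$; and in parts (3)--(4), verifying that the frozen generator $g^*$ is admissible---it satisfies (A1) trivially with $K=0$, and satisfies (A2) because $g(\cdot,Y^i_\cdot,Z^i_\cdot)\in L^2_{\bF}(\CW,L^1(0,T';\R))$ by the Lipschitz bound (A1), assumption (A2), and the a priori estimate of Theorem \ref{th2}---together with orienting the comparison so that $g^*$ is the larger generator evaluated along its own solution $(\overline Y,\overline Z)$.
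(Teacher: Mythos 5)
Your proposal is correct, and for parts (1) and (2) it is essentially the paper's own argument: multiply the transposition identity (\ref{transp}) by $\alpha$ and invoke uniqueness, respectively add $\beta$ and use that $E\lt[\beta\int_s^{T'}v_r\,dW_r\rt]=0$ for $s\ge t$ because $\beta$ is $\F_t$-measurable. For parts (3) and (4) you take a slightly different route in the choice of auxiliary generator. The paper shows that $(Y^1+Y^2,Z^1+Z^2)$ solves the BSDE with the shifted generator $\bar g(t,y,z)=g\lt(t,y-Y^2_t,z-Z^2_t\rt)+g\lt(t,Y^2_t,Z^2_t\rt)$ (and an $\alpha$-rescaled analogue for convexity, which implicitly needs $\alpha>0$), then compares $g\le\bar g$ pointwise in $(y,z)$ and applies Theorem \ref{comparison}. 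You instead freeze the generator entirely, taking $g^*(r)=g(r,Y^1_r,Z^1_r)+g(r,Y^2_r,Z^2_r)$, which is constant in $(y,z)$; the inequality $g(r,\overline Y_r,\overline Z_r)\le g^*(r)$ then follows directly from sub-additivity (resp.\ convexity), and Theorem \ref{comparison} applies because its hypothesis only requires the generator inequality along the solution of the dominating equation. Your version buys a trivially admissible auxiliary generator (Lipschitz with $K=0$, and (A2) via the a priori estimate, as you note) and handles $\alpha\in\{0,1\}$ without a separate case, at the cost of having to check that hypothesis of the comparison theorem in its "along the solution" form rather than pointwise; both are equally valid here, and you correctly identified the one delicate point, namely orienting the comparison so that $g^*$ is evaluated along its own solution $(\overline Y,\overline Z)$.
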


\begin{proof}[\textup{\bf Proof}]
We only give the proof when $T'=T$, the other situation is similar. For notational convenience, we will write $(Y_{t}^{g,\xi},Z_{t}^{g,\xi})_{t\in[0,T]}$ instead of \small$\lt(Y_{t}\lt(g,T, \xi\rt),Z_{t}\lt(g,T, \xi\rt)\rt)_{t\in[0,T]}$ \normalsize to denote the transposition solutions of BSDE with generator $g$ and terminal condition $\xi$ in this proof.

(1). For any $\alpha\ge 0$, $t\in[0,T]$, due to the positive homogeneity of $g$ and identity (\ref{transp}), we have
\begin{align*}
E\lt[\alpha\xi X_T+\int_t^T X_rg(r,\alpha Y^{g,\xi}_r,\alpha Z^{g,\xi}_r)dr\rt]=&E\lt[\alpha\xi X_T+\int_t^T\alpha X_rg(r,Y^{g,\xi}_r,Z^{g,\xi}_r)dr\rt]\\
=&E\lt[(\alpha Y_t^{g,\xi})\eta+\int_t^T u_r(\alpha Y^{g,\xi}_r)dr+\int_t^T v_r(\alpha Z^{g,\xi}_r)dr\rt].
\end{align*}
By the definition of transposition solution (Definition \ref{de1}), we have
$$\lt(Y_{t}^{g,\alpha \xi},Z_{t}^{g,\alpha \xi}\rt)_{t\in[0,T]}=\lt(\alpha Y_{t}^{g,\xi},\alpha Z_{t}^{g,  \xi}\rt)_{t\in[0,T]} $$
       in $ L^2_{\bF}(\CW,D([0,T];\R))\times L^2_{\bF}(\CW,L^2(0,T;\R^{d})).$

(2). For any $s\in[t,T]$, because of  (\ref{transp}) and $g$ being independent of $y$,  we have
\begin{align*}
E\lt[(\xi+\beta) X_T+\int_s^T X_rg(r,Y_r^{g,\xi}+\beta, Z_r^{g,\xi})dr\rt]=&E\lt[\beta(\eta+\int_s^Tu_rdr)+\xi X_T+\int_s^T X_rg(r,Y_r^{g,\xi},Z_r^{g,\xi})dr\rt]\\
=&E\lt[( Y_s^{g,\xi}+\beta)\eta+\int_s^T u_r( Y_r^{g,\xi}+\beta)ds+\int_s^T v_rZ_r^{g,\xi}dr\rt].
\end{align*}
By the definition of transposition solution (Definition \ref{de1}), we have
$$\lt(Y_{s}^{g, \xi+\beta},Z_{s}^{g, \xi+\beta}\rt)_{s\in[t,T]}=\lt( Y_{s}^{g,\xi}+\beta, Z_{s}^{g,  \xi}\rt)_{s\in[t,T]} $$
 in $ L^2_{\bF}(\CW,D([t,T];\R))\times L^2_{\bF}(\CW,L^2(t,T;\R^{d})).$

(3). For any $t\in[0,T]$, owing to (\ref{transp}), we have
\begin{align*}
&E\lt[(\xi_1+\xi_2)X_T+\int_t^TX_r\lt(g(r,Y_r^{g,\xi_1},Z_r^{g,\xi_1})+g(r,Y_r^{g,\xi_2},Z_r^{g,\xi_2})\rt)dr\rt]\\
=&E\lt[(Y_t^{g,\xi_1}+Y_t^{g,\xi_2})\eta+\int_t^Tu_r(Y_r^{g,\xi_1}+Y_r^{g,\xi_2})dr+\int_t^Tv_r(Z_r^{g,\xi_1}+Z_r^{g,\xi_2})dr\rt].
\end{align*}
For any $(\omega,t,y,z)\in\CW\times[0,T]\times\R\times\R^d $, define $$\bar{g}(\omega,t,y,z)=g\lt(\omega,t,y-Y_t^{g,\xi_2}(\omega),z-Z_t^{g,\xi_2}(\omega)\rt)+g\lt(\omega,t,Y_t^{g,\xi_2}(\omega),Z_t^{g,\xi_2}(\omega)\rt).$$
For any  $\xi_1,\xi_2\in L^2_{\F_T}(\CW,\R)$, it can be easily checked that
$$\lt(Y_{t}^{\bar{g}, \xi_1+\xi_2},Z_{t}^{\bar{g}, \xi_1+\xi_2}\rt)_{t\in[0,T]}=\lt( Y_{t}^{g,\xi_1}+Y_{t}^{g,\xi_2}, Z_{t}^{g,  \xi_1}+Z_{t}^{g,  \xi_2}\rt)_{t\in[0,T]} $$
 in $ L^2_{\bF}(\CW,D([0,T];\R))\times L^2_{\bF}(\CW,L^2(0,T;\R^{d})).$
Since $g$ is sub-additive in $(y,z)$, we have, $g(t,y,z)\leq \bar{g}(t,y,z)$ $dP\times dt$-a.s. It follows from the   comparison theorem (Theorem \ref{comparison}) that for all $t\in [0,T]$,
$$Y_{t}^{g, \xi_1+\xi_2}\leq Y_{t}^{\bar{g}, \xi_1+\xi_2}=Y_{t}^{g,\xi_1}+Y_{t}^{g,\xi_2}\ P\text{-a.s.}$$

(4). For any $\alpha\in[0,1]$, $t\in[0,T]$, thanks to (\ref{transp}), we have
\begin{align*}
&E\lt[(\alpha\xi_1+(1-\alpha)\xi_2)X_T+\int_t^TX_r\lt(\alpha g(r,Y_r^{g,\xi_1},Z_r^{g,\xi_1})+(1-\alpha)g(r,Y_r^{g,\xi_2},Z_r^{g,\xi_2})\rt)dr\rt]\\
=&E\lt[(\alpha Y_t^{g,\xi_1}+(1-\alpha)Y_t^{g,\xi_2})\eta+\int_t^Tu_r(\alpha Y_r^{g,\xi_1}+(1-\alpha)Y_r^{g,\xi_2})dr+\int_t^Tv_r(\alpha Z_r^{g,\xi_1}+(1-\alpha)Z_r^{g,\xi_2})dr\rt].
\end{align*}
For any $(\omega,t,y,z)\in\CW\times[0,T]\times\R\times\R^d $, define
\small$$\tilde{g}(\omega,t,y,z)=\alpha g\lt(\omega,t,\frac{1}{\alpha}(y-(1-\alpha)Y_t^{g,\xi_2}(\omega)),\frac{1}{\alpha}(z-(1-\alpha)Z_t^{g,\xi_2}(\omega))\rt)+(1-\alpha)g\lt(\omega,t,Y_t^{g,\xi_2}(\omega),Z_t^{g,\xi_2}(\omega)\rt).$$
\normalsize
For any  $\xi_1,\xi_2\in L^2_{\F_T}(\CW,\R)$, it can be easily checked that
\begin{align*}
\lt(Y_{t}^{\tilde{g}, \alpha\xi_1+(1-\alpha)\xi_2},Z_{t}^{\tilde{g}, \alpha\xi_1+(1-\alpha)\xi_2}\rt)_{t\in[0,T]}
=\lt( \alpha Y_{t}^{g,\xi_1}+(1-\alpha)Y_{t}^{g,\xi_2}, \alpha Z_{t}^{g,  \xi_1}+(1-\alpha)Z_{t}^{g,  \xi_2}\rt)_{t\in[0,T]}
\end{align*}
in  $L^2_{\bF}(\CW,D([0,T];\R))\times L^2_{\bF}(\CW,L^2(0,T;\R^{d}))$.
It follows from the convexity of  $g$ in $(y,z)$ that
\begin{align*}
g(t,y,z)&\leq\alpha g\lt(t,\frac{1}{\alpha}(y-(1-\alpha)Y_t^{g,\xi_2}),\frac{1}{\alpha}(z-(1-\alpha)Z_t^{g,\xi_2})\rt)+(1-\alpha)g\lt(t,Y_t^{g,\xi_2},Z_t^{g,\xi_2}\rt)\\
&=\bar{g}(t,y,z)\quad dP\times dt\text{-a.s.}
\end{align*}
Then, by the  comparison theorem (Theorem \ref{comparison}), we have for all $t\in [0,T]$,
$$Y_{t}^{g, \alpha\xi_1+(1-\alpha)\xi_2}\le Y_{t}^{\tilde{g}, \alpha\xi_1+(1-\alpha)\xi_2}= \alpha Y_{t}^{g, \xi_1}+(1-\alpha)Y_{t}^{g, \xi_2}\quad P\text{-a.s.}$$
\end{proof}
\begin{remark}
The  positive homogeneity and subadditivity of $g$  together are also known as sublinearity, which implies the convexity of $g$. Furthermore, from a remark of Briand et al. \cite{Briand}, the  assumption (A1) and (A5) and the convexity of $g$ imply that $g$ does not depend on y.
\end{remark}

If the assumption (A2) in Theorem \ref{th1} is strengthened to (A3), then the necessary conditions can also be sufficient.
\begin{thm}\label{converprop}
Let $g$ be any given function satisfied assumptions (A1) and (A3).
\begin{enumerate}[(1).]
  \item\textup{\bf (Positive homogeneity)} If for any $T'\in[0,T]$, $\xi\in L^2_{\F_{T'}}(\CW,\R)$,  and $\alpha\geq0$,
      \begin{equation}\label{posieq}
      Y_{t}\lt(g,T',\alpha \xi\rt)=\alpha Y_{t}\lt(g,T',\xi\rt),\quad \forall t\in[0,T']\quad P\text{-a.s.}
      \end{equation}
  Then $g$ is positive homogeneous in $(y,z)$.
  \item\textup{\bf (Translation invariance)} If  for any  $T'\in[0,T],\xi\in L^2_{\F_{T'}}(\CW,\R),$  and $c\in\R$,
    \begin{equation}\label{traneq}
    Y_{t}\lt(g,T', \xi+c\rt)= Y_{t}\lt(g,T',\xi\rt)+c\quad   \forall t\in[0,T']\quad P\text{-a.s.}
    \end{equation}
     Then  $g$ is independent of $y$.
  \item\textup{\bf (Sub-additivity)} If for any  $T'\in[0,T],\xi_1,\xi_2\in L^2_{\F_{T'}}(\CW,\R)$,
    \begin{equation}\label{subeq}
    Y_{t}\lt(g,T', \xi_1+\xi_2\rt)\leq Y_{t}\lt(g,T',\xi_1\rt)+Y_{t}\lt(g,T',\xi_2\rt)\quad   \forall t\in[0,T']\quad P\text{-a.s.}
    \end{equation}
     Then $g$ is  sub-additive in $(y,z)$.
  \item\textup{\bf (Convexity)} If  for any  $T'\in[0,T],\xi_1,\xi_2\in L^2_{\F_{T'}}(\CW,\R)$, and $\alpha\in[0,1]$,
 \begin{equation}\label{coneq}
 Y_{t}\lt(g,T', \alpha\xi_1+(1-\alpha)\xi_2\rt)\le \alpha Y_{t}\lt(g,T', \xi_1\rt)+(1-\alpha)Y_{t}\lt(g,T', \xi_2\rt)\   \forall t\in[0,T']\quad P\text{-a.s.}
 \end{equation}
    Then $g$ is convex in $(y,z)$.
\end{enumerate}
\end{thm}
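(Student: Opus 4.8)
The plan is to read off each property of $g$ from the local representation furnished by Corollary \ref{rep-cor}. Recall that for any fixed $(y,z)\in\R\times\R^d$ and every $t$ in the full-measure set $S_{y,z}(g)$,
\begin{equation*}
g(t,y,z)=\lim_{\ep\to0^+}\frac{1}{\ep}\lt[Y_t\lt(g,t+\ep,y+z(W_{t+\ep}-W_t)\rt)-y\rt]\quad\text{in }L^2.
\end{equation*}
The unifying observation is that the terminal datum $y+z(W_{t+\ep}-W_t)$ is affine in $(y,z)$, so any linear or affine combination of the pairs $(y_i,z_i)$ corresponds exactly to the same combination of the random terminal conditions $\xi_i^\ep:=y_i+z_i(W_{t+\ep}-W_t)\in L^2_{\F_{t+\ep}}(\CW,\R)$. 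Each hypothesis is therefore applied with $T'=t+\ep$ and these $\xi_i^\ep$, and the conclusion on $g$ is obtained by dividing by $\ep$ and letting $\ep\to0^+$.

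Concretely, for part (1) I would fix $(y,z)$, $\alpha\ge0$ and $t\in S_{y,z}(g)\cap S_{\alpha y,\alpha z}(g)$, use the identity $\alpha y+\alpha z(W_{t+\ep}-W_t)=\alpha(y+z(W_{t+\ep}-W_t))$, and invoke (\ref{posieq}) to replace $Y_t(g,t+\ep,\alpha\xi^\ep)$ by $\alpha Y_t(g,t+\ep,\xi^\ep)$; the difference quotient for $g(t,\alpha y,\alpha z)$ then equals $\alpha$ times that for $g(t,y,z)$, and the $L^2$-limit gives positive homogeneity. Part (2) is the same computation with a constant shift: since $(y+c)+z(W_{t+\ep}-W_t)=\xi^\ep+c$, (\ref{traneq}) yields $Y_t(g,t+\ep,\xi^\ep+c)=Y_t(g,t+\ep,\xi^\ep)+c$, the $+c$ cancels in the difference quotient $\frac{1}{\ep}\lt[\,\cdot\,-(y+c)\rt]$, and the limit gives $g(t,y+c,z)=g(t,y,z)$ for every $c$.

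Parts (3) and (4) follow the identical template, but now the conclusions are inequalities. Fixing $(y_1,z_1),(y_2,z_2)$ (and $\alpha\in[0,1]$ in (4)) and $t$ in the intersection of the relevant sets $S_{\cdot,\cdot}(g)$, which is still of full measure, I would split the combined terminal datum as $\xi_1^\ep+\xi_2^\ep$ (resp.\ $\alpha\xi_1^\ep+(1-\alpha)\xi_2^\ep$) and apply (\ref{subeq}) (resp.\ (\ref{coneq})) to bound the combined difference quotient, $P$-a.s.\ and for each $\ep$, by the sum (resp.\ convex combination) of the individual difference quotients.

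The one genuine subtlety is passing these inequalities through the $L^2$-limit. Since $L^2$-convergence entails almost sure convergence along a subsequence, I would extract a single sequence $\ep_k\downarrow0$ along which all the difference quotients involved converge $P$-a.s.\ to the corresponding values $g(t,\cdot,\cdot)$; the inequality, valid for each $\ep_k$, then survives in the limit, establishing subadditivity (resp.\ convexity) of $g$ at the fixed data for $dP\times dt$-a.e.\ $(\w,t)$. A minor but standard point, needed mainly to promote part (2) to a bona fide statement that $g$ does not depend on $y$, is to run the argument over a countable dense set of shifts and then extend to all $y$ using the Lipschitz continuity of $g$ in $y$ from (A1); the same density-plus-continuity remark covers the ``for all $(y,z)$'' phrasing in the remaining parts. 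Everything else is the routine bookkeeping of matching affine terminal data to the four hypotheses.
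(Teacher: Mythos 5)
Your proposal is correct, and for parts (3) and (4) it coincides with the paper's own argument: fix $(y_1,z_1),(y_2,z_2)$ (and $\alpha$), take $t$ in the intersection of the relevant full-measure sets $S_{\cdot,\cdot}(g)$, feed the affine terminal data $\xi_i^\ep=y_i+z_i(W_{t+\ep}-W_t)$ with $T'=t+\ep$ into the hypothesis, and pass the inequality between difference quotients to the limit via Corollary \ref{rep-cor}; your explicit extraction of a common a.s.-convergent subsequence fills in a step the paper leaves implicit. Where you genuinely diverge is in parts (1) and (2): the paper does not use the representation theorem there at all. Instead it introduces the transformed generators $g^{\alpha}(t,y,z)=\frac{1}{\alpha}g(t,\alpha y,\alpha z)$ and $g^{c}(t,y,z)=g(t,y-c,z)$, checks via uniqueness of the transposition solution that $Y_t(g^{\alpha},T',\xi)=\frac{1}{\alpha}Y_t(g,T',\alpha\xi)$ and $Y_t(g^{c},T',\xi)=Y_t(g,T',\xi-c)+c$ (so that the hypotheses (\ref{posieq}) and (\ref{traneq}) force $Y_t(g^{\alpha},T',\cdot)=Y_t(g,T',\cdot)$ and $Y_t(g^{c},T',\cdot)=Y_t(g,T',\cdot)$ for all terminal data), and then invokes the converse comparison theorem (Theorem \ref{conversethm}) in both directions to conclude $g=g^{\alpha}$ and $g=g^{c}$. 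Your route handles (1) and (2) by the same affine-terminal-data computation as (3) and (4) — exploiting $\alpha y+\alpha z\Delta W=\alpha(y+z\Delta W)$ and $(y+c)+z\Delta W=(y+z\Delta W)+c$ — which makes the four parts uniform and avoids having to verify that $g^{\alpha}$ and $g^{c}$ again satisfy (A1) and (A3); the paper's route for (1)--(2) buys a cleaner "equality of generators" conclusion in one stroke from an already-established theorem (itself proved by the representation theorem, so the two arguments rest on the same foundation). Your closing remark on upgrading the per-$(y,z,c)$ null sets via a countable dense set and the Lipschitz condition (A1) is a sensible precaution, and in fact the paper's own conclusion carries the same exceptional-set dependence, so you are at least as careful on this point.
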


\begin{proof}[\textup{\bf Proof}]
(1). For the case $\alpha=0$, it is trivial.
For the case $\alpha>0$, for any $T'\in[0,T]$ and $(\omega,t,y,z)\in \Omega\times[0,T']\times\R\times\R^d$, define
$$g^{\alpha}(\omega,t,y,z):=\frac{1}{\alpha}g\lt(\omega,t,\alpha y,\alpha z\rt).$$
It is clear that $g^{\alpha}$ also satisfies assumptions (A1) and (A3). By the definition and the uniqueness of transposition solution (Definition \ref{de1}, Theorem \ref{th1}), we have that, for any $\xi\in L^2_{\F_{T'}}(\CW,\R)$,
$$\lt(Y_{t}\lt(g^\alpha,T',\frac{\xi}{\alpha}\rt),Z_{t}\lt(g^{\alpha},T',\frac{\xi}{\alpha}\rt)\rt)_{t\in[0,T']}=\lt( \frac{1}{\alpha}Y_{t}\lt(g,T',\xi\rt),\frac{1}{\alpha}Z_{t}\lt(g,T',  \xi\rt)\rt)_{t\in[0,T']} $$
in $ L^2_{\bF}(\CW,D([0,T'];\R))\times L^2_{\bF}(\CW,L^2(0,T';\R^{d})).$

Combining this equality with (\ref{posieq}), we obtain for any $t\in[0,T']$, $\xi\in L^2_{\F_{T'}}(\CW,\R)$,
$$Y_{t}\lt(g^\alpha,T', \xi\rt)= Y_{t}\lt(g,T',\xi\rt)\quad P\text{-a.s.}$$
Then by the  converse comparison theorem Theorem \ref{conversethm}, we have for any $\alpha>0$,
$$\text{for all } (y,z)\in\R\times\R^d ,\ g(t,y,z)=g^{\alpha}(t,y,z)=\frac{1}{\alpha}g\lt(t,\alpha y,\alpha z\rt),  \ dP\times dt\text{-a.s.}$$

(2).  Let $c\in\R$, for any $T'\in[0,T]$ and $(\omega,t,y,z)\in\Omega\times[0,T']\times\R\times\R^d$, define
$$g^{c}(\omega,t,y,z):= g\lt(\omega,t,y-c,z\rt).$$
It is easy to check that $g^{c}$ also satisfies assumptions (A1) and (A3). By the definition and the uniqueness of transposition solution (Definition \ref{de1}, Theorem \ref{th1}), we have that, for any $\xi\in L^2_{\F_{T'}}(\CW,\R)$,
 $$\lt(Y_{t}\lt(g^c,T', \xi+c\rt),Z_{t}\lt(g^c,T', \xi+c\rt)\rt)_{t\in[0,T']}=( Y_{t}\lt(g,T',\xi\rt)+c
 , Z_{t}\lt(g,T',  \xi\rt))_{t\in[0,T']} $$
 in $ L^2_{\bF}(\CW,D([0,T'];\R))\times L^2_{\bF}(\CW,L^2(0,T';\R^{d})).$

Combining this equality with (\ref{traneq}), we obtain for any $t\in[0,T']$, $\xi\in L^2_{\F_{T'}}(\CW,\R)$,
$$Y_{t}\lt(g^c,T', \xi\rt)= Y_{t}\lt(g,T',\xi\rt)\quad P\text{-a.s.}$$
Then by the  converse comparison theorem Theorem \ref{conversethm}, we have for any $c\in\R$,
$$\text{for all } (y,z)\in\R\times\R^d\ g(t,y,z)=g^{c}(t,y,z)=g\lt(t, y-c, z\rt),  \ dP\times dt\text{-a.s.}$$
That is,  $g$ is independent of $y$.

(3). For any fixed $(y_1,z_1),(y_2,z_2)\in\R\times\R^d $, and $t\in S_{y_1+y_2,z_1+z_2}(g)\cap S_{y_1,z_1}(g)\cap S_{y_2,z_2}(g)$, we have
\begin{align*}
g(t,y_1+y_2,z_1+z_2)&=\lim\limits_{n\to\infty}n\lt\{Y_t\lt(g,t+1/n,y_1+y_2+(z_1+z_2)(W_{t+1/n}-W_t)\rt)-y_1-y_2\rt\}\quad\text{ in }L^2;\\
g(t,y_1,z_1)&=\lim\limits_{n\to\infty}n\lt\{Y_t\lt(g,t+1/n,y_1+z_1(W_{t+1/n}-W_t)\rt)-y_1\rt\}\quad\text{ in }L^2;\\
g(t,y_2,z_2)&=\lim\limits_{n\to\infty}n\lt\{Y_t\lt(g,t+1/n,y_2+z_2(W_{t+1/n}-W_t)\rt)-y_2\rt\}\quad\text{ in }L^2.
\end{align*}
Thanks to Corollary \ref{rep-cor} and (\ref{subeq}), we can deduce that $g$ is sub-additive in $(y,z)$.

(4). The proof of (4) is similar to (3), so we omit it.
\end{proof}

\section{Extended $g$-expectation and Conditional $g$-expectation with General Filtration}
To well define $g$-expectation on $L^2_{\F_T}(\CW,\R)$, $g$ is required to satisfy assumptions (A1) and (A5) as in Peng \cite{Peng1997-2}.
\begin{definition}\label{gexpectation}
For any given function $g$ satisfied assumptions (A1) and (A5), and any $Y^T\in L^2_{\F_T}(\CW,\R)$,
let $(Y_{\cdot},Z_{\cdot})\in L^2_{\bF}(\CW,D([0,T];\R))\times L^2_{\bF}(\CW,L^2(0,T;\R^d))$ be the unique transposition solution of BSDE (\ref{bsde}).
 The g-expectation $\HE_g[\cdot] $ of $Y^T$ is defined by $$\HE_g[Y^T]=Y_0.$$
 \end{definition}
\begin{remark}
If $\F_0=\F_0'$, where $\F_0'$ is the $\sigma$-algebra generated by all the $P$-null sets in $\F$,  then $g$-expectation of $Y^T$ is a constant $P$-a.s. If $\F_0$ is larger than $\F_0'$, the $g$-expectation of $Y^T$ may be a random variable. So, in the sequel, $\HE_g[\xi^1]=\HE_g[\xi^2]$ always means that  $\HE_g[\xi^1]=\HE_g[\xi^2]\ P$-a.s. If we want the $g$-expectation of  $Y^T\in L^2_{\F_T}(\CW,\R)$ to be a constant ($P$-a.s.) as the
the classical expectation or nonlinear expectations be,  we can restrict $\F_0$ to be $\F'_0$.

\end{remark}
 \begin{prop}\label{prop1}
Suppose that $g$ satisfies assumptions (A1) and (A5). For any $Y^T\in L^2_{\F_T}(\CW,\R)$, let $(Y_{\cdot},Z_{\cdot})\in L^2_{\bF}(\CW,D([0,T];\R))\times L^2_{\bF}(\CW,L^2(0,T;\R^d))$ be the unique transposition solution of BSDE (\ref{bsde}). Then for any $ t\in[0,T]$
$$\HE_g[Y_t]=\HE_g[Y^T].$$
\end{prop}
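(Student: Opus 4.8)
The plan is to compute $\HE_g[Y_t]$ directly from Definition \ref{gexpectation} and show it equals $Y_0=\HE_g[Y^T]$. Since $Y_t\in L^2_{\F_t}(\CW,\R)\subseteq L^2_{\F_T}(\CW,\R)$, the quantity $\HE_g[Y_t]$ is well defined as $\hat Y_0$, where $(\hat Y,\hat Z)$ is the unique transposition solution on $[0,T]$ of the BSDE with terminal datum $Y_t$. I would exhibit this solution explicitly by gluing: put $(\hat Y_s,\hat Z_s):=(Y_s,Z_s)$ for $s\in[0,t]$ and $(\hat Y_s,\hat Z_s):=(Y_t,0)$ for $s\in(t,T]$. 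This pair is adapted, has c\`adl\`ag paths, lies in $L^2_{\bF}(\CW,D([0,T];\R))\times L^2_{\bF}(\CW,L^2(0,T;\R^d))$, and satisfies $\hat Y_0=Y_0$. If I verify that $(\hat Y,\hat Z)$ is a transposition solution with terminal $Y_t$, then uniqueness (Theorem \ref{th2}) forces it to be \emph{the} solution, and reading off its value at $0$ gives $\HE_g[Y_t]=Y_0=\HE_g[Y^T]$.

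The verification amounts to checking the defining identity (\ref{transp}) (with $Y^T$ replaced by $Y_t$) for every lower endpoint $s\in[0,T]$, split according to whether $s\ge t$ or $s<t$. The crucial structural input is (A5): on $(t,T]$ we have $\hat Z_r=0$, hence $g(r,\hat Y_r,\hat Z_r)=g(r,Y_t,0)=0$, so the generator contributes nothing there. For $s\in[t,T]$ the identity reduces to $E[Y_t X_T]=E[Y_t(\eta+\int_s^T u_r\,dr)]$; writing $X_T=\eta+\int_s^T u_r\,dr+\int_s^T v_r\,dW_r$, the two sides differ only by $E[Y_t\int_s^T v_r\,dW_r]$, which vanishes because $Y_t$ is $\F_t\subseteq\F_s$-measurable and the It\^o integral has zero conditional expectation given $\F_s$.

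For $s<t$ I would invoke the local characterization (Proposition \ref{prop3}) applied to the original solution $(Y,Z)$ on the subinterval $[s,t]$, which yields
\begin{equation*}
E\lt[Y_t X_t+\int_s^t X_r g(r,Y_r,Z_r)\,dr\rt]=E\lt[Y_s\eta+\int_s^t u_r Y_r\,dr+\int_s^t v_r Z_r\,dr\rt].
\end{equation*}
The target identity for $(\hat Y,\hat Z)$ over $[s,T]$ has its generator integral supported on $[s,t]$ only, and its left-hand side exceeds that of the displayed equation by $E[Y_t(X_T-X_t)]$; expanding $X_T-X_t=\int_t^T u_r\,dr+\int_t^T v_r\,dW_r$ and once more discarding the stochastic-integral term against the $\F_t$-measurable $Y_t$ leaves exactly the extra drift term $E[\int_t^T u_r Y_t\,dr]$, which matches the remaining piece $\int_t^T u_r\hat Y_r\,dr$ on the right-hand side. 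Thus the identity holds for all $s$.

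I expect the main obstacle to be purely bookkeeping: tracking which integrals live on $[s,t]$ versus $[t,T]$ and confirming, via the $\F_t$-measurability of $Y_t$ together with the martingale property of the It\^o integrals, that every $\int v\,dW$ term paired with $Y_t$ drops out. The conceptual content is entirely in assumption (A5), which freezes the solution on $[t,T]$ and makes the gluing compatible with the dynamics; without it the constant extension would fail to solve the BSDE and the identity would break down.
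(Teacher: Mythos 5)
Your proposal is correct and follows essentially the same route as the paper: you construct the identical glued pair $(Y_sI_{\{s\le t\}}+Y_tI_{\{s>t\}},\,Z_sI_{\{s\le t\}})$, use (A5) to annihilate the generator on $(t,T]$, invoke Proposition \ref{prop3} on $[s,t]$ for the case $s<t$, and discard the It\^o-integral terms paired with the $\F_t$-measurable $Y_t$ exactly as the paper does. No gaps.
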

\begin{proof}[{\bf Proof}]
We only need to prove that for any $t\in[0,T]$, $\lt(Y_sI_{\{s\le t\}}+Y_{t}I_{\{s> t\}},Z_sI_{\{s\leq t\}}\rt)_{s\in[0,T]}$ is the transposition solution of BSDE (\ref{bsde}) with terminal condition $Y_{t}$. In other words, we need to prove that for any
$s\in [0,T]$, $u_{\cdot}\in L^2_{\bF}(\CW,L^1(s,T;\R))$, $v_{\cdot}\in L^2_{\bF}(\CW,L^2(s,T;\R^{ d})) $ and $\eta\in L^2_{\F_s}(\CW,\R)$, the following identity
holds
\begin{align}\label{eq6}
&E\lt[Y_{t} X_T+\int_s^TX_rg\lt(r,Y_rI_{\{r\leq t\}}+Y_tI_{\{r>t\}},Z_rI_{\{r\leq t\}}\rt)dr\rt]\nonumber\\
=&E\lt[(Y_sI_{\{s\le t\}}+Y_{t}I_{\{s> t\}})\eta+\int_s^Tu_r\lt(Y_rI_{\{r\leq t\}}+Y_{t}I_{\{r>t\}}\right)dr+\int_s^Tv_rZ_rI_{\{r\leq t\}}dr\rt], \quad 0\leq  s\leq T,
\end{align}
where $X_{\cdot}\in L^2_{\bF}(\CW,C([s,T];\R))$  is the unique strong solution of SDE (\ref{sde}) starting from $s$.

If $s> t$, because of the assumption (A5), we have
\begin{align*}
&E\lt[Y_{t} X_T+\int_s^TX_rg\lt(r,Y_rI_{\{r\leq t\}}+Y_tI_{\{r>t\}},Z_rI_{\{r\leq t\}}\rt)dr\rt]
=E\lt[Y_{t} X_T\rt]\\
=&E\lt[Y_{t}E\lt[\lt(\eta+\int_s^Tu_r dr+\int_s^Tv_r dW_r\rt)\big|\mathcal{F}_{t}\rt]\rt]=E\lt[Y_{t}\eta+\int_s^TY_{t}u_r dr\rt]\\
=&E\lt[(Y_sI_{\{s\le t\}}+Y_{t}I_{\{s> t\}})\eta+\int_s^Tu_r\lt(Y_rI_{\{r\leq t\}}+Y_{t}I_{\{r>t\}}\right)dr+\int_s^Tv_rZ_rI_{\{r\leq t\}}dr\rt].
\end{align*}
If $s\leq t$, due to the assumption (A5), we have
\begin{align*}
&E\lt[Y_{t} X_T+\int_s^TX_rg\lt(r,Y_rI_{\{r\leq t\}}+Y_tI_{\{r>t\}},Z_rI_{\{r\leq t\}}\rt)dr\rt]\\
=&E\lt[Y_{t} X_T+\int_s^{t}X_rg(r,Y_r,Z_r)dr\rt]\\
=&E\lt[Y_t\lt(X_{t}+\int_{t}^T u_r dr+\int_{t}^T v_r dW_r\rt)+\int_s^{t}X_rg(r,Y_r,Z_r)dr\rt].
\end{align*}
It follows from Proposition \ref{prop3} that
\begin{align*}
&E\lt[Y_{t} \lt(X_{t}+\int_{t}^T u_r dr+\int_{t}^T v_r dW_r\rt)+\int_s^{t}X_rg(r,Y_r,Z_r)dr\rt]\\
=&E\lt[Y_{t} X_{t}+\int_s^{t}X_rg(r,Y_r,Z_r)dr+Y_{t}\int_{t}^T u_r dr\rt]\\
=&E\lt[Y_s\eta+\int_s^{t}u_rY_rdr+Y_{t}\int_{t}^Tu_rdr+\int_s^{t}v_rZ_rdr\rt]\\
=&E\lt[(Y_sI_{\{s\le t\}}+Y_{t}I_{\{s> t\}})\eta+\int_s^Tu_r\lt(Y_rI_{\{r\leq t\}}+Y_{t}I_{\{r>t\}}\right)dr+\int_s^Tv_rZ_rI_{\{r\leq t\}}dr\rt]
\end{align*}
which shows the identity (\ref{eq6}) hold.
\end{proof}

Further more, we could introduce the conditional $g$-expectation of  $Y^T$ with respect to $\mathcal{F}_t$, $t\in [0,T]$. By analogy with the notion of the classical expectation and Peng's g-expectation (see \cite{Peng1997-2}), we are looking for a random variable $\zeta$ satisfying (\ref{eq1})
\begin{equation}\label{eq1}
\ \left\{\begin{aligned}
(i)&\ \zeta\text{ is }\F_t\text{-measurable and  }\zeta\in L_{\F_t}^2(\CW,\R);\\
(ii)&\ \HE_g[I_AY^T]=\HE_g[I_A\zeta],\ \text{ for all }A\in\F_t.\\
\end{aligned}\right.
\end{equation}
Actually, we have
\begin{thm}\label{condiexp}
Suppose that $g$ satisfies assumptions (A1) and (A5). For any $t\in[0,T]$ and each   $Y^T\in L^2_{\F_T}(\CW,\R)$, there exists a $P$-a.s. unique random variable $\zeta$ in $ L^2_{\F_t}(\CW,\R)$ satisfies  (\ref{eq1}). Furthermore, this $\zeta$ coincides with $Y_t$, the transposition solution of BSDE (\ref{bsde}) at time $t$.
\end{thm}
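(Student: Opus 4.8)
The plan is to exhibit the candidate $\zeta=Y_t$ explicitly, verify that it fulfills both requirements in (\ref{eq1}), and establish uniqueness separately via the strict comparison theorem. Requirement (i) is immediate: since $(Y_\cdot,Z_\cdot)$ is the transposition solution of (\ref{bsde}) it is $\bF$-adapted and belongs to $L^2_{\bF}(\CW,D([0,T];\R))\times L^2_{\bF}(\CW,L^2(0,T;\R^d))$, so $Y_t\in L^2_{\F_t}(\CW,\R)$. The whole substance is therefore (ii), namely $\HE_g[I_AY^T]=\HE_g[I_AY_t]$ for every $A\in\F_t$.

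The key step is a localization property: for fixed $A\in\F_t$, the transposition solution $(\bar Y_\cdot,\bar Z_\cdot)$ of (\ref{bsde}) with terminal $I_AY^T$ satisfies $(\bar Y_s,\bar Z_s)=(I_AY_s,I_AZ_s)$ for all $s\in[t,T]$, so in particular $\bar Y_t=I_AY_t$. First I would use assumption (A5): since $g(r,y,0)=0$, one checks $g(r,I_AY_r,I_AZ_r)=I_A\,g(r,Y_r,Z_r)$, the two sides agreeing on $A$ and both vanishing on $A^c$. Next, for any test data $(\eta,u_\cdot,v_\cdot)$ on a subinterval $[s,T]$ with $s\ge t$, the linear SDE (\ref{sde}) driven by the scaled data $(I_A\eta,I_Au_\cdot,I_Av_\cdot)$ has solution $I_AX_\cdot$, because $I_A$ is $\F_s$-measurable and can be pulled through both the Lebesgue and the It\^o integral. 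Substituting $(I_A\eta,I_Au_\cdot,I_Av_\cdot)$ into the transposition identity (\ref{transp}) satisfied by $(Y_\cdot,Z_\cdot)$ and factoring out $I_A$ converts that identity into exactly the transposition identity for $(I_AY_\cdot,I_AZ_\cdot)$ with terminal $I_AY^T$. Hence $(I_AY_s,I_AZ_s)_{s\in[t,T]}$ is a transposition solution on $[t,T]$ of the BSDE with generator $g$ and terminal $I_AY^T$; as the restriction of $(\bar Y,\bar Z)$ to $[t,T]$ is another such solution, the uniqueness in Theorem \ref{th2} forces them to coincide, giving $\bar Y_t=I_AY_t$.

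With the localization in hand, (ii) follows quickly, and uniqueness is handled by comparison. Applying Proposition \ref{prop1} to the terminal $I_AY^T$ yields $\HE_g[\bar Y_t]=\HE_g[I_AY^T]$, and since $\bar Y_t=I_AY_t$ this reads $\HE_g[I_AY_t]=\HE_g[I_AY^T]$, which is precisely (ii) for $\zeta=Y_t$; thus $Y_t$ satisfies (\ref{eq1}). For uniqueness I would invoke the strict comparison theorem (Theorem \ref{comparison}), noting that (A5) implies $g(\cdot,0,0)=0$, so (A2) holds and the theorem applies. If $\zeta_1,\zeta_2\in L^2_{\F_t}(\CW,\R)$ both satisfy (\ref{eq1}), then $\HE_g[I_A\zeta_1]=\HE_g[I_AY^T]=\HE_g[I_A\zeta_2]$ for all $A\in\F_t$. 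Taking $A=\{\zeta_1>\zeta_2\}\in\F_t$ gives $I_A\zeta_1\ge I_A\zeta_2$, and equality of the two $g$-expectations means the corresponding transposition solutions agree at time $0$; the strictness clause of Theorem \ref{comparison} then forces $I_A\zeta_1=I_A\zeta_2$ $P$-a.s., i.e. $P(\zeta_1>\zeta_2)=0$. The symmetric choice yields $P(\zeta_2>\zeta_1)=0$, whence $\zeta_1=\zeta_2$ $P$-a.s. Combining existence, identification and uniqueness shows that $\zeta$ exists, is $P$-a.s. unique, and equals $Y_t$.

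I expect the main obstacle to be the localization step. Unlike in the strong-solution setting under the natural filtration, where restricting a solution to an $\F_t$-measurable event is routine, here it must be read off from the duality formulation (\ref{transp}), and the argument hinges on combining (A5) with the observation that an $\F_s$-measurable indicator commutes with the linear SDE (\ref{sde}). Some care is also needed to confirm that $(I_AY_\cdot,I_AZ_\cdot)$ remains in the correct solution spaces, so that the uniqueness of Theorem \ref{th2} can legitimately be applied on $[t,T]$.
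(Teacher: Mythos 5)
Your proposal is correct and follows essentially the same route as the paper's own proof: the same localization step showing $(I_AY_\cdot,I_AZ_\cdot)$ solves the BSDE with terminal $I_AY^T$ on $[t,T]$ (using (A5) and the fact that $I_AX_\cdot$ solves the SDE with data $(I_A\eta,I_Au_\cdot,I_Av_\cdot)$), the same appeal to Proposition \ref{prop1} to conclude (ii), and the same use of the strict comparison theorem for uniqueness.
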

\begin{definition}\label{de2}
Suppose that $g$ satisfies assumptions (A1) and (A5). For any $t\in[0,T]$, we define the random variable $\zeta$ satisfying (\ref{eq1}) as the conditional g-expectation  of $Y^T$ under $\F_t$, denoted by $\HE_g[Y^T|\F_t]$.
\end{definition}
\begin{remark}
When $\bF$ is the natural filtration generated by the Brownian motion $\{W_t\}_{t\in[0,T]}$ , the  definition of g-expectation and conditional g-expectation here are coincident with the definition introduced by Peng \cite{Peng1997-2} since the transposition solution of BSDE coincides with the usual strong solution in this case (\cite{LuZhang}).
\end{remark}

\begin{proof}[\textup{\bf Proof of Theorem \ref{condiexp}}]
  \textit{Uniqueness:} If both $\zeta_1,\zeta_2$   satisfy  (\ref{eq1}), then we have for all $A\in\F_t$
$$\HE_g[I_A\zeta_1]=\HE_g[I_A\zeta_2].$$
In particular,  setting $A=\{\zeta_1\geq\zeta_2\} $ or $A=\{\zeta_1\leq\zeta_2\}$, we get
$$
\HE_g[I_{\{\zeta_1\geq\zeta_2\}}\zeta_1]=\HE_g[I_{\{\zeta_1\geq\zeta_2\}}\zeta_2],\quad
\HE_g[I_{\{\zeta_1\leq\zeta_2\}}\zeta_1]=\HE_g[I_{\{\zeta_1\leq\zeta_2\}}\zeta_2].
$$
However
$$
I_{\{\zeta_1\geq\zeta_2\}}\zeta_1\geq I_{\{\zeta_1\geq\zeta_2\}}\zeta_2,\quad
I_{\{\zeta_1\leq\zeta_2\}}\zeta_1\leq I_{\{\zeta_1\leq\zeta_2\}}\zeta_2,
$$
It follows from the comparison theorem (Theorem \ref{comparison}) that
$$
I_{\{\zeta_1\geq\zeta_2\}}\zeta_1=I_{\{\zeta_1\geq\zeta_2\}}\zeta_2,\ P\text{-a.s.},\quad
I_{\{\zeta_1\leq\zeta_2\}}\zeta_1=I_{\{\zeta_1\leq\zeta_2\}}\zeta_2,\ P\text{-a.s.}
$$
Thus $\zeta_1=\zeta_2,\ P$-a.s.

\textit{Existence:} Let  $(Y_s,Z_s)_{s\in[0,T]}$ be the transposition solution of BSDE (\ref{bsde}) with $Y_T=Y^T$, that is
\begin{equation}\label{eq2}
E\lt[Y^T X_T+\int_s^TX_rg(r,Y_r,Z_r)dr\rt]=E\lt[Y_s\eta+\int_s^Tu_rY_rdr+\int_s^Tv_rZ_rdr\rt], \text{ for all }  s\in[0,T],
\end{equation}
and let
 $(Y'_s,Z'_s)_{s\in[0,T]}$ be the transposition solution of BSDE (\ref{bsde}) with $Y'_T=I_AY^T$, that is,
\begin{equation}\label{eq3}
E\lt[I_AY^T X_T+\int_s^TX_rg(r,Y'_r,Z'_r)dr\rt]=E\lt[Y'_s\eta+\int_s^Tu_rY'_rdr+\int_s^Tv_rZ'_rdr\rt], \text{ for all } s\in[0,T],
\end{equation}
where $A\in \mathcal{F}_t$, $X_{\cdot} \in L^2_{\bF}(\CW,C([s,T];\R))$ is the unique strong solution of the following SDE with any fixed $ u_{\cdot}\in L^2_{\bF}(\CW,L^1(s,T;\R)),v_{\cdot}\in L^2_{\bF}(\CW,L^2(s,T;\R^d)) $ and $\eta\in L^2_{\F_s}(\CW,\R)$,
\begin{equation*}
\left\{\begin{aligned}
&dX_r=u_rdr+v_rdW_r,\quad r\in[s,T]\\
&X_s=\eta\\
\end{aligned}\right..
\end{equation*}
For $s\in[t,T]$, we have
\begin{equation*}
\left\{\begin{aligned}
&dI_AX_r=I_Au_rdr+I_Av_rdW_r,\quad r\in[s,T]\\
&I_AX_s=I_A\eta\\
\end{aligned}\right..
\end{equation*}
It follows from (\ref{eq2}) that\small
\begin{equation*}
E\lt[Y^T (X_TI_A)+\int_s^T(X_rI_A)g(r,Y_r,Z_r)dr\rt]=E\lt[Y_s( I_A\eta)+\int_s^T(u_rI_A)Y_rdr+\int_s^T(v_rI_A)Z_rdr\rt], \text{ for all } s\in[t,T].
\end{equation*}\normalsize
Due to assumption (A5), we obtain\small
\begin{equation}\label{eq4}
E\lt[(I_AY^T) X_T+\int_s^TX_rg(r,I_AY_r,I_AZ_r)dr\rt]=E\lt[(I_AY_s)\eta+\int_s^Tu_r(I_AY_r)dr+\int_s^Tv_r(I_AZ_r)dr\rt], \text{ for all } s\in[t,T].
\end{equation}\normalsize
Comparing (\ref{eq3}) and (\ref{eq4}), we have $(Y'_{\cdot},Z'_{\cdot})=(I_AY_{\cdot},I_AZ_{\cdot})$ in $L^2_{\bF}(\CW,D([t,T];\R))\times L^2_{\bF}(\CW,L^2(t,T;\R^d))$
Thus, by Proposition \ref{prop1}, we have
$\HE_g[I_AY_t]=\HE_g[Y'_t]=\HE_g[Y'_T]=\HE_g[I_AY^T]$.
The proof is completed.
\end{proof}

\begin{thm}\label{prop}
For any given function $g$ satisfied assumptions (A1) and (A5),  we have the following properties:
\begin{enumerate}[(1).]
  \item  \textup{\bf(Monotonicity)} For any $\xi_1,\xi_2\in L^2_{\F_T}(\CW,\R)$, and if $\xi_1\geq\xi_2$ P-a.s., then for any $t\in[0,T]$ we have $\HE_g[\xi_1|\F_t]\geq\HE_g[\xi_2|\F_t]$ P-a.s. Moreover, $\HE_g[\xi_1|\F_t]=\HE_g[\xi_2|\F_t]$ P-a.s. for some $t\in[0,T]$, if and only if $\xi_1=\xi_2$ P-a.s.\\
  For any $t\in[0,T]$, $\xi_1,\xi_2\in L^2_{\F_t}(\CW,\R)$, $\xi_1\ge\xi_2$ $P$-a.s. if and only if for all $A\in\mathcal{F}_t$, $\HE_g[\xi_1I_A]\geq\HE_g[\xi_2I_A]$.
  \item \textup{\bf(``Zero-one'' law)} For all $\xi\in L^2_{\F_T}(\CW,\R)$, $B\in\mathcal{F}_t$, we have $\HE_g[I_B\xi|\F_t]=I_B\HE_g[\xi|\F_t]$.
  \item For any $\xi_1,\xi_2\in L^2_{\F_T}(\CW,\R)$ and $t\in[0,T]$, there exists a conxtant $C>0$ such that
      $$E\lt[\lt|\HE_g[\xi_1|\F_t]-\HE_g[\xi_2|\mathcal{F}_t]\rt|^2\rt]\leq CE[|\xi_1-\xi_2|^2]$$
  \item For each $t\in[0,T]$, if $\xi\in L^2_{\F_t}(\CW,\R)$, then $\HE_g[\xi|\F_t]=\xi$.
  \item  \textup{\bf(Constant preserving)} For any $t\in[0,T]$ and $c\in\R$, $\HE_g[c|\F_t]=c$.
  \item \textup{\bf(Time consistency)} For all $\xi\in L^2_{\F_T}(\CW,\R)$ and $t_1,t_2\in[0,T]$,  $\HE_g[\HE_g[\xi|\F_{t_2}]|\F_{t_1}]=\HE_g[\xi|\F_{t_1\wedge t_2}]$.

  \end{enumerate}
  \end{thm}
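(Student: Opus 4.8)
The plan is to work throughout from the identification $\HE_g[\xi|\F_t]=Y_t$, the value at time $t$ of the transposition solution $(Y_\cdot,Z_\cdot)$ of BSDE (\ref{bsde}) with terminal $\xi$, proved in Theorem \ref{condiexp}, together with $\HE_g[\xi]=Y_0$ from Definition \ref{gexpectation}. With this identification, properties (1)--(3) become statements about transposition solutions that are already available. For the monotonicity in (1), if $\xi_1\ge\xi_2$ then the comparison theorem (Theorem \ref{comparison}), applied with $g_1=g_2=g$ so that the generator hypothesis holds with equality, gives $Y^1_t\ge Y^2_t$, i.e. $\HE_g[\xi_1|\F_t]\ge\HE_g[\xi_2|\F_t]$; the strict part is the strict comparison of the same theorem, which for $g_1=g_2$ reduces to ``$Y^1_t=Y^2_t$ iff $\xi_1=\xi_2$''. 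For the final assertion of (1), the forward implication follows from monotonicity applied to $\xi_1I_A\ge\xi_2I_A$; for the converse I would take $A=\{\xi_1<\xi_2\}\in\F_t$, observe that $\xi_1I_A\le\xi_2I_A$ together with the hypothesis forces $\HE_g[\xi_1I_A]=\HE_g[\xi_2I_A]$, and then conclude $\xi_1I_A=\xi_2I_A$ $P$-a.s. by strict comparison, whence $P(A)=0$.

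For the ``Zero-one'' law (2) I would reuse the computation in the existence part of Theorem \ref{condiexp}: for $B\in\F_t$ the transposition solution with terminal $I_B\xi$ coincides with $(I_BY_\cdot,I_BZ_\cdot)$ on $[t,T]$, so evaluating at $s=t$ gives $\HE_g[I_B\xi|\F_t]=I_BY_t=I_B\HE_g[\xi|\F_t]$. For the Lipschitz-type estimate (3), I would subtract the transposition identities for $(Y^1,Z^1)$ and $(Y^2,Z^2)$ and read off that the difference $(\widehat Y,\widehat Z):=(Y^1-Y^2,Z^1-Z^2)$ is the transposition solution of the BSDE with terminal $\xi_1-\xi_2$ and generator $\widetilde g(s,y,z):=g(s,y+Y^2_s,z+Z^2_s)-g(s,Y^2_s,Z^2_s)$. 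Since $\widetilde g$ is $K$-Lipschitz with $\widetilde g(\cdot,0,0)\equiv0$, estimate (\ref{estim}) of Theorem \ref{th2} gives $E[\sup_{s}|\widehat Y_s|^2]\le C\,E[|\xi_1-\xi_2|^2]$, and restricting to $s=t$ yields the claim.

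Properties (4) and (5) are cleanest through the uniqueness characterization (\ref{eq1}). If $\xi\in L^2_{\F_t}(\CW,\R)$ then $\xi$ is itself $\F_t$-measurable and trivially satisfies $\HE_g[I_A\xi]=\HE_g[I_A\xi]$ for every $A\in\F_t$; by the uniqueness asserted in Theorem \ref{condiexp}, the random variable characterized by (\ref{eq1}) must equal $\xi$, so $\HE_g[\xi|\F_t]=\xi$. Property (5) is then the special case $\xi=c$, since $c\in L^2_{\F_t}(\CW,\R)$.

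The main work is the time consistency (6), which I would split according to the order of $t_1,t_2$. If $t_1\ge t_2$, then $\HE_g[\xi|\F_{t_2}]=Y_{t_2}$ is $\F_{t_2}$- hence $\F_{t_1}$-measurable, and (4) gives at once $\HE_g[\HE_g[\xi|\F_{t_2}]|\F_{t_1}]=Y_{t_2}=\HE_g[\xi|\F_{t_1\wedge t_2}]$. The delicate case is $t_1<t_2$. Writing $(\overline Y,\overline Z)$ for the transposition solution with terminal $Y_{t_2}$, property (4) applied to $Y_{t_2}\in L^2_{\F_{t_2}}(\CW,\R)$ gives $\overline Y_{t_2}=\HE_g[Y_{t_2}|\F_{t_2}]=Y_{t_2}$. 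Then Proposition \ref{prop3}, applied on $[0,t_2]$ to each of $(Y,Z)$ and $(\overline Y,\overline Z)$, shows that both, restricted to $[0,t_2]$, are transposition solutions on $[0,t_2]$ of the BSDE with the common terminal $Y_{t_2}$ at time $t_2$; uniqueness in Theorem \ref{th2} (with horizon $t_2$) forces $\overline Y_s=Y_s$ for $s\in[0,t_2]$, and evaluating at $s=t_1$ gives $\HE_g[\HE_g[\xi|\F_{t_2}]|\F_{t_1}]=\overline Y_{t_1}=Y_{t_1}=\HE_g[\xi|\F_{t_1}]$. I expect the crux to be exactly this flow/restriction step: converting the global transposition identity into one on the subinterval $[0,t_2]$ with terminal $Y_{t_2}$ and then invoking uniqueness there, whereas the remaining properties follow directly from the comparison theorem, the a priori estimate, and the uniqueness characterization of the conditional $g$-expectation.
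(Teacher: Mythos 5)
Your proposal is correct, and its skeleton matches the paper's: identify $\HE_g[\cdot|\F_t]$ with the transposition solution $Y_t$, get (1) from Theorem \ref{comparison}, get (2) from the computation $(Y'_\cdot,Z'_\cdot)=(I_BY_\cdot,I_BZ_\cdot)$ on $[t,T]$ already carried out in the existence part of Theorem \ref{condiexp}, and get (3) from the a priori estimate of Theorem \ref{th2} applied to the shifted generator $\bar g(s,y,z)=g(s,y+Y^2_s,z+Z^2_s)-g(s,Y^2_s,Z^2_s)$. Where you genuinely diverge is in (4) and (6). For (4), the paper verifies by hand, using (A5), that the constant pair $(\xi,0)$ satisfies the transposition identity on $[t,T]$, whereas you simply note that $\xi$ itself satisfies the characterization (\ref{eq1}) of its own conditional $g$-expectation and invoke the uniqueness in Theorem \ref{condiexp}; your route is shorter, while the paper's explicit verification also yields $Z\equiv 0$ after $t$, which is what its treatment of (6) exploits. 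For (6), the paper (following the method of Proposition \ref{prop1}) extends the solution \emph{forward}: it checks that $\lt(Y_sI_{\{s\le t_2\}}+Y_{t_2}I_{\{s>t_2\}},\,Z_sI_{\{s\le t_2\}}\rt)_{s\in[0,T]}$ is the transposition solution on all of $[0,T]$ with terminal $Y_{t_2}$, using (A5) to kill the generator on $[t_2,T]$, which handles both orderings of $t_1,t_2$ at once; you instead restrict \emph{backward}, using Proposition \ref{prop3} to see that $(Y,Z)$ and $(\overline Y,\overline Z)$ both restrict to transposition solutions on $[0,t_2]$ with the common terminal $Y_{t_2}$ (the latter because $\overline Y_{t_2}=Y_{t_2}$ by your part (4)), and conclude by uniqueness on the shorter horizon. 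Both arguments are sound: yours avoids the explicit indicator/It\^{o} computation at the cost of invoking well-posedness on the subinterval, while the paper's forward extension is self-contained and reused verbatim from Proposition \ref{prop1}.
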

\begin{proof}[\textup{\bf Proof}]
(1). This can be deduced from Theorem \ref{comparison} directly.

(2). For each $A\in\mathcal{F}_t$, we have
$$\HE_g[I_AI_B\xi]=\HE_g[I_{A\cap B}\xi]=\HE_g[I_{A\cap B}\HE_g[\xi|\mathcal{F}_t]]=\HE_g[I_A(I_B\HE_g[\xi|\mathcal{F}_t])].$$
It follows from the definition of the conditional $g$-expectation (Definition \ref{de2}) that $\HE_g[I_B\xi|\F_t]=I_B\HE_g[\xi|\F_t]$.

(3). For any $t\in[0,T]$, suppose  $(Y_r^i,Z_r^i)_{r\in[t,T]}$ is the unique one satisfied  (\ref{transp}) with $Y^T=\xi_i$, i=1,2. Then, we have
\begin{align*}
&E\lt[(\xi_1-\xi_2)X_T+\int_t^TX_r\lt(g(r,Y_r^1,Z_r^1)-g(r,Y_r^2,Z_r^2)\rt)dr\rt]\\
=&E\lt[(Y_t^1-Y_t^2)\eta+\int_t^Tu_r(Y_r^1-Y_r^2)dr+\int_t^Tv_r(Z_r^1-Z_r^2)dr\rt].
\end{align*}
\normalsize
For any $(\omega,t,y,z)\in\CW\times[0,T]\times\R\times\R^d $, define $$\bar{g}(\omega,t,y,z)=g(\omega,t,y+Y_t^2(\omega),z+Z_t^2(\omega))-g(\omega,t,Y_t^2(\omega),Z_t^2(\omega)).$$ It can be easily checked that $\HE_{\bar{g}}[\xi_1-\xi_2|\F_t]=Y_t^1-Y_t^2=\HE_g[\xi_1|\F_t]-\HE_g[\xi_2|\F_t]$. By theorem \ref{th2}, we have
    $$E\lt[\lt|\HE_g[\xi_1|\F_t]-\HE_g[\xi_2|\F_t]\rt|^2\rt]=E[|\HE_{\bar{g}}[\xi_1-\xi_2|\F_t]|^2]\leq CE[|\xi_1-\xi_2|^2].$$

(4). We only need to prove $(Y_r,Z_r)_{r\in[t,T]}=(\xi,0)$ satisfying the (\ref{transp}) with $Y^T=\xi \in L^2_{\F_t}(\CW,\R)$. Due to the assumption (A5), the left hand of  (\ref{transp}) is
$$E\lt[\xi X_T+\int_t^TX_rg(r,\xi,0)dr\rt]=E[\xi X_T].$$
Meanwhile, the right hand of  (\ref{transp}) is
\begin{align*}
E\lt[\xi \eta+\int_t^T\xi u_rdr+\int_t^T0\cdot v_r dr\rt]&=E\lt[\xi(\eta+\int_t^Tu_rdr)\rt]\\
&=E\lt[\xi E\lt[\lt. \eta+\int_t^Tu_rdr\rt|\F_t\rt]\rt]\\
&=E\lt[\xi E\lt[\lt.\eta+\int_t^Tu_rdr+\int_t^Tv_rdW_r\rt|\F_t\rt]\rt]\\
&=E[\xi X_T].
\end{align*}
Therefore, $\HE_g[\xi|\F_t]=\xi$.

(5). It is a special case of (4).

(6).  The method to prove (6) is similar to the method used in the proof of Proposition \ref{prop1}, so we omit it.
\end{proof}

\begin{thm} Let $g_1,g_2$ be two functions satisfied assumptions (A1) and (A5). \begin{enumerate}[(1).]
\item  If for all $(y,z)\in\R\times\R^d, $  $g_1(t,y,z)\geq g_2(t,y,z)$ $dP\times dt$-a.s., then for any $\xi\in L^2_{\mathcal{F}_T}(\Omega,\mathbb{R})$, $t\in [0,T]$, we have $\HE_{g_1}[\xi|\mathcal{F}_t]\geq \HE_{g_2}[\xi|\mathcal{F}_t]$ $P$-a.s.
 \item If for all $\xi\in L^2_{\mathcal{F}_T}(\Omega,\mathbb{R})$, $\HE_{g_1}[\xi]=\HE_{g_2}[\xi]$, then for all $t\in[0,T]$, $\HE_{g_1}[\xi|\mathcal{F}_t]=\HE_{g_2}[\xi|\mathcal{F}_t]$ $P$-a.s.
\item If for all $\xi\in L^2_{\mathcal{F}_T}(\Omega,\mathbb{R}), t\in[0,T]$, $\HE_{g_1}[\xi|\mathcal{F}_t]\geq\HE_{g_2}[\xi|\mathcal{F}_t] \ P\text{-a.s.},$
then we have
$$\text{for all } (y,z)\in\R\times\R^d ,\quad g_1(t,y,z)\geq g_2(t,y,z),  \quad dP\times dt\text{-a.s.}$$
In particular, for any  $\xi\in L^2_{\mathcal{F}_T}(\Omega,\mathbb{R})$, $\HE_{g_1}[\xi]=\HE_{g_2}[\xi]$ if and only if
$$\text{for all } (y,z)\in\R\times\R^d ,\quad g_1(t,y,z)= g_2(t,y,z),  \quad dP\times dt\text{-a.s.}$$
\end{enumerate}
\end{thm}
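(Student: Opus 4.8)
The plan is to dispatch the three parts in order, each by reduction to a theorem already in hand. Throughout I use that $\HE_{g_i}[\xi|\F_t]=Y_t(g_i,T,\xi)$ (Theorem~\ref{condiexp}), so every claim is really a statement about transposition solutions, and I note that (A5) forces $g_i(t,0,0)=0$, whence (A2) and (A3) hold automatically and Theorems~\ref{th2}, \ref{comparison} and \ref{conversethm} are all applicable to $g_1,g_2$. For (1) I would appeal to the comparison theorem. First I upgrade the hypothesis: the statement ``$g_1(t,y,z)\ge g_2(t,y,z)$ $dP\times dt$-a.s.\ for each fixed $(y,z)$'' is promoted, using the Lipschitz continuity in (A1) and a countable dense set of rational $(y,z)$, to the statement that off a single $dP\times dt$-null set $g_1(\cdot,\cdot,y,z)\ge g_2(\cdot,\cdot,y,z)$ holds simultaneously for all $(y,z)$. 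Writing $(Y^1,Z^1)$ for the transposition solution driven by $g_1$ with terminal datum $\xi$, I evaluate along the random pair $(Y^1_t,Z^1_t)$ to obtain $g_1(t,Y^1_t,Z^1_t)\ge g_2(t,Y^1_t,Z^1_t)$ $dP\times dt$-a.s.; together with the trivial $\xi\ge\xi$ this is precisely the hypothesis of Theorem~\ref{comparison}, whose conclusion $Y_t(g_1,T,\xi)\ge Y_t(g_2,T,\xi)$ is assertion (1).

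Part (2) is the cleanest and rests only on uniqueness of conditional $g$-expectations. Fix $\xi$ and $t$ and set $\zeta_1:=\HE_{g_1}[\xi|\F_t]\in L^2_{\F_t}(\CW,\R)$. By the defining identity (\ref{eq1}) for $g_1$, $\HE_{g_1}[I_A\xi]=\HE_{g_1}[I_A\zeta_1]$ for every $A\in\F_t$. Since $\F_t\subseteq\F_T$, both $I_A\xi$ and $I_A\zeta_1$ lie in $L^2_{\F_T}(\CW,\R)$, so the hypothesis $\HE_{g_1}=\HE_{g_2}$ may be applied to each, yielding $\HE_{g_2}[I_A\zeta_1]=\HE_{g_1}[I_A\zeta_1]=\HE_{g_1}[I_A\xi]=\HE_{g_2}[I_A\xi]$ for all $A\in\F_t$. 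As $\zeta_1$ is $\F_t$-measurable, this exhibits $\zeta_1$ as a random variable satisfying the characterization (\ref{eq1}) of $\HE_{g_2}[\xi|\F_t]$; the uniqueness half of Theorem~\ref{condiexp} then forces $\zeta_1=\HE_{g_2}[\xi|\F_t]$, which is (2).

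For (3) the target is the converse comparison theorem (Theorem~\ref{conversethm}), but that result is stated for transposition solutions with \emph{arbitrary} terminal time $T'$, whereas our conditional expectations carry the fixed horizon $T$. The bridge I would prove is: for $\xi\in L^2_{\F_{T'}}(\CW,\R)$ and $t\le T'\le T$ one has $\HE_g[\xi|\F_t]=Y_t(g,T',\xi)$. Indeed, property (4) of Theorem~\ref{prop} (using (A5)) gives $Y_s(g,T,\xi)=\xi$ for $s\in[T',T]$, so $Y_{T'}(g,T,\xi)=\xi$; Proposition~\ref{prop3} with upper endpoint $T'$ then shows that $(Y,Z)(g,T,\xi)$ restricted to $[0,T']$ satisfies the transposition identity with terminal datum $\xi$ at $T'$, and uniqueness (Theorem~\ref{th2}) identifies it with $(Y,Z)(g,T',\xi)$. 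Granting this, the hypothesis $\HE_{g_1}[\xi|\F_t]\ge\HE_{g_2}[\xi|\F_t]$ becomes $Y_t(g_1,T',\xi)\ge Y_t(g_2,T',\xi)$ for all $T'$, all $\xi\in L^2_{\F_{T'}}(\CW,\R)$ and all $t\le T'$; applying Theorem~\ref{conversethm} with the roles of $g_1$ and $g_2$ interchanged delivers $g_1\ge g_2$ $dP\times dt$-a.s. The ``in particular'' is then routine: $g_1=g_2$ a.s.\ makes the two BSDEs identical, hence $\HE_{g_1}[\xi]=\HE_{g_2}[\xi]$; conversely $\HE_{g_1}[\xi]=\HE_{g_2}[\xi]$ for all $\xi$ gives, through part (2), $\HE_{g_1}[\xi|\F_t]=\HE_{g_2}[\xi|\F_t]$ for all $t$, whence both inequalities hold and two applications of the first half of (3) force $g_1=g_2$ $dP\times dt$-a.s.

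The genuinely non-formal step, and the one I expect to be the main obstacle, is the terminal-time reduction in (3)---matching the fixed-horizon conditional $g$-expectation $\HE_g[\xi|\F_t]$ with the free-horizon transposition solution $Y_t(g,T',\xi)$, so that the hypotheses of Theorem~\ref{conversethm} are literally met. This is exactly where assumption (A5) is indispensable; once it is in place, the remaining content of all three parts is a direct invocation of the comparison, uniqueness, and converse-comparison results established earlier.
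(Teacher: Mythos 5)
Your proposal is correct and follows essentially the same route as the paper: part (1) via the comparison theorem, part (2) via the uniqueness in the characterization of conditional $g$-expectations, and part (3) via the converse comparison theorem. The only difference is that you carefully supply two steps the paper leaves implicit --- the upgrade of the pointwise-in-$(y,z)$ inequality to a single null set in (1), and the identification $\HE_g[\xi|\F_t]=Y_t(g,T',\xi)$ for $\xi\in L^2_{\F_{T'}}(\CW,\R)$ needed to invoke Theorem \ref{conversethm} in (3) --- both of which are genuinely required and correctly handled.
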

\begin{proof}[\textup{\bf Proof}]
(1). The proof follows directly from Theorem \ref{comparison}.

(2). By the definition of  conditional g-expectation, for any fixed $\xi\in L^2_{\F_T}(\CW,\R)$, any $t\in[0,T]$ and $A\in\F_t$, we have
$$\HE_{g_1}[I_A\HE_{g_1}[\xi|\F_t]]=\HE_{g_1}[I_A\xi]=\HE_{g_2}[I_A\xi]=\HE_{g_2}[I_A\HE_{g_2}[\xi|\F_t]]=\HE_{g_1}[I_A\HE_{g_2}[\xi|\F_t]].$$
From the uniqueness in Theorem \ref{condiexp}, we have
$$\HE_{g_1}[\xi|\F_t]=\HE_{g_2}[\xi|\F_t],\quad P\text{-a.s.}$$

(3). The proof follows directly from Theorem \ref{conversethm}
\end{proof}

For studying dynamic risk measures, Rosazza Gianin \cite{Gianin} and Jiang \cite{Jiang2008} studied the
 positive homogeneity, translation invariance, sub-additivity and convexity of classical g-expectations and conditional g-expectations. Thanks to Theorem \ref{th1} and \ref{converprop}, we also establish the necessary
and sufficient conditions for positive homogeneity, translation invariance, subadditivity and convexity
of the extended g-expectations, respectively, which extend the Theorem 3.1-3.4 of \cite{Jiang2008} to the  general filtration probability spaces.

\begin{coro}[{\bf Positive homogeneity}]\label{Positive}
Let $g$ be any given function satisfied assumptions (A1) and (A5), then the following
statements are equivalent,
  \begin{enumerate}[(1).]
   \item $ \HE_g[\cdot] $ is positive homogeneous;
   \item $ \HE_g[\cdot|\F_t] $ is positive homogeneous for any  $t\in[0,T]$, that is, for any $\xi\in L^2_{\F_T}(\CW,\R)$ and $\alpha\geq0$,
      $$\HE_g[\alpha \xi|\F_t]=\alpha\HE_g[\xi|\F_t]\ P\text{-a.s.};$$
   \item   $g$ is positive homogeneous in $(y,z)$.
    \end{enumerate}
\end{coro}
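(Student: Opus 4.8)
The plan is to prove the two equivalences $(1)\Leftrightarrow(2)$ and $(2)\Leftrightarrow(3)$, which together yield the three-way equivalence. Throughout I would use that (A5) forces $g(t,0,0)=0$, so that (A2) and (A3) hold automatically and Theorems \ref{th1} and \ref{converprop} both apply; recall also that by Definitions \ref{gexpectation} and \ref{de2} one has $\HE_g[\xi]=Y_0(g,T,\xi)$ and $\HE_g[\xi|\F_t]=Y_t(g,T,\xi)$.

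The implication $(2)\Rightarrow(1)$ is immediate upon taking $t=0$, since $\HE_g[\cdot]=\HE_g[\cdot|\F_0]$. For $(3)\Rightarrow(2)$ I would apply Theorem \ref{th1}(1) with $T'=T$: positive homogeneity of $g$ gives $Y_t(g,T,\alpha\xi)=\alpha Y_t(g,T,\xi)$ for every $t$, which reads precisely $\HE_g[\alpha\xi|\F_t]=\alpha\HE_g[\xi|\F_t]$.

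The first nontrivial step is $(1)\Rightarrow(2)$. Fix $t$, $\alpha\ge0$ and $\xi\in L^2_{\F_T}(\CW,\R)$, and set $\zeta:=\alpha\HE_g[\xi|\F_t]$, which is $\F_t$-measurable. I would check that $\zeta$ satisfies the defining property (\ref{eq1}) of the conditional $g$-expectation of $\alpha\xi$: for every $A\in\F_t$, applying (1) first to $I_A\xi$ and then to $I_A\HE_g[\xi|\F_t]$ (both in $L^2_{\F_T}(\CW,\R)$),
\begin{align*}
\HE_g[I_A\alpha\xi]=\alpha\HE_g[I_A\xi]=\alpha\HE_g\lt[I_A\HE_g[\xi|\F_t]\rt]=\HE_g\lt[I_A\,\zeta\rt].
\end{align*}
By the uniqueness part of Theorem \ref{condiexp} this forces $\HE_g[\alpha\xi|\F_t]=\zeta=\alpha\HE_g[\xi|\F_t]$, i.e. (2). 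Note this uses only the unconditional homogeneity together with the characterization (\ref{eq1}); the Zero–one law is not needed.

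Finally, for $(2)\Rightarrow(3)$ I would invoke the converse comparison result Theorem \ref{converprop}(1), whose hypothesis (\ref{posieq}) requires $Y_t(g,T',\alpha\xi)=\alpha Y_t(g,T',\xi)$ at \emph{every} terminal time $T'\in[0,T]$, every $\xi\in L^2_{\F_{T'}}(\CW,\R)$ and every $t\le T'$, whereas (2) only furnishes homogeneity of $Y_t(g,T,\cdot)$ at the single terminal time $T$. The bridge — and the step I expect to be the main obstacle — is the identification $Y_t(g,T,\xi)=Y_t(g,T',\xi)$ for $t\le T'$ whenever $\xi\in L^2_{\F_{T'}}(\CW,\R)$. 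This is a time-consistency/restriction property: since $\xi$ is $\F_{T'}$-measurable, Theorem \ref{prop}(4) gives $Y_{T'}(g,T,\xi)=\HE_g[\xi|\F_{T'}]=\xi$ (under (A5) the solution remains at $\xi$ with vanishing $Z$ on $[T',T]$), and Proposition \ref{prop3} then identifies the restriction of $\lt(Y(g,T,\xi),Z(g,T,\xi)\rt)$ to $[0,T']$ with the transposition solution carrying terminal $\xi$ at $T'$, i.e. with $\lt(Y(g,T',\xi),Z(g,T',\xi)\rt)$. Granting this, (2) applied to $\xi\in L^2_{\F_{T'}}(\CW,\R)$ yields $Y_t(g,T',\alpha\xi)=\alpha Y_t(g,T',\xi)$ for all $t\le T'$, so hypothesis (\ref{posieq}) is met and Theorem \ref{converprop}(1) delivers the positive homogeneity of $g$ in $(y,z)$, closing the cycle.
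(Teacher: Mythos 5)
Your proof is correct, and it follows the route the paper intends: the paper states this corollary without any proof, as a direct consequence of Theorems \ref{th1} and \ref{converprop}, so your main contribution is to supply two details that the paper glosses over. First, your derivation of $(1)\Rightarrow(2)$ via the defining property (\ref{eq1}) and the uniqueness part of Theorem \ref{condiexp} is clean and legitimate; it is worth noting that in the analogous Corollaries \ref{Subadditivity} and \ref{convex} the paper must instead argue by contradiction using the ``Zero-one'' law and the comparison theorem, because there the conditional statement is an inequality rather than an equality --- for an equality your direct uniqueness argument is simpler and suffices. Second, you correctly identify the real obstacle: Theorem \ref{converprop}(1) requires homogeneity of $Y_t(g,T',\cdot)$ at \emph{every} terminal time $T'\in[0,T]$, whereas statement (2) only concerns terminal time $T$. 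The bridge $Y_t(g,T,\xi)=Y_t(g,T',\xi)$ for $t\le T'$ and $\xi\in L^2_{\F_{T'}}(\CW,\R)$, which you obtain from (A5) together with Theorem \ref{prop}(4), Proposition \ref{prop3} and uniqueness of transposition solutions on subintervals, is exactly what is needed, and the paper uses this identification tacitly (both here and in its proofs of Corollaries \ref{Subadditivity} and \ref{convex}). The remaining implications --- $(2)\Rightarrow(1)$ by taking $t=0$ and $(3)\Rightarrow(2)$ by Theorem \ref{th1}(1), with (A5) guaranteeing that (A2) and (A3) hold --- are exactly as in the paper. I see no gaps.
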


\begin{coro}[{\bf Translation invariance}]\label{Translation}
Let $g$ be any given function satisfied assumptions (A1) and (A5), then the following
statements are equivalent,
\begin{enumerate}[(1).]
\item For any  $ \xi\in L^2_{\F_T}(\CW,\R),\ c\in\R$, we have $\HE_g[\xi+c]=\HE_g[\xi]+c$;
\item For any  $t\in[0,T],\xi\in L^2_{\F_T}(\CW,\R),\beta\in L^2_{\F_t}(\CW,\R)$, we have
      $\HE_g[\xi+\beta|\F_t]=\HE_g[\xi|\F_t]+\beta\ P\text{-a.s.};$
\item  $g$ is independent of $y$.
\end{enumerate}
\end{coro}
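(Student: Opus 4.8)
The plan is to establish the cyclic chain of implications $(3)\Rightarrow(2)\Rightarrow(1)\Rightarrow(3)$, reading each statement through the dictionary $\HE_g[\xi]=Y_0(g,T,\xi)$ (Definition \ref{gexpectation}) and $\HE_g[\xi|\F_t]=Y_t(g,T,\xi)$ (Theorem \ref{condiexp}). Under this dictionary the first two implications are essentially restatements of the translation results already proved for transposition solutions, while the closing implication $(1)\Rightarrow(3)$ is the substantive one.

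For $(3)\Rightarrow(2)$ I would assume $g$ is independent of $y$ and apply Theorem \ref{th1}(2) with $T'=T$: for every $t\in[0,T]$, $\xi\in L^2_{\F_T}(\CW,\R)$ and $\beta\in L^2_{\F_t}(\CW,\R)$ one has $(Y_s(g,T,\xi+\beta),Z_s(g,T,\xi+\beta))_{s\in[t,T]}=(Y_s(g,T,\xi)+\beta,Z_s(g,T,\xi))_{s\in[t,T]}$. Evaluating the $Y$-component at $s=t$ and translating via Theorem \ref{condiexp} yields $\HE_g[\xi+\beta|\F_t]=\HE_g[\xi|\F_t]+\beta$, which is (2). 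For $(2)\Rightarrow(1)$ I would specialise (2) to $t=0$ and $\beta=c\in\R$ (a constant lies in $L^2_{\F_0}(\CW,\R)$), and use $\HE_g[\cdot|\F_0]=Y_0(g,T,\cdot)=\HE_g[\cdot]$ to get $\HE_g[\xi+c]=\HE_g[\xi]+c$.

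The main obstacle is $(1)\Rightarrow(3)$, since (1) supplies information only at the single time $t=0$ and horizon $T$, whereas the converse comparison machinery (Theorem \ref{conversethm}, Theorem \ref{converprop}) nominally asks for all times and all horizons. To bridge this I would introduce, for each fixed $c\in\R$, the shifted generator $g^c(t,y,z):=g(t,y-c,z)$, which again satisfies (A1) and (A5) (in particular $g^c(t,y,0)=g(t,y-c,0)=0$, so its $g$-expectation is well defined). A direct substitution into the transposition identity (\ref{transp}) — the same computation carried out inside the proof of Theorem \ref{converprop}(2) — shows that $(Y_\cdot(g,T,\xi)+c,Z_\cdot(g,T,\xi))$ is the transposition solution of the BSDE with generator $g^c$ and terminal $\xi+c$, whence $\HE_{g^c}[\xi+c]=\HE_g[\xi]+c$. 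Combining this with hypothesis (1) gives $\HE_{g^c}[\xi+c]=\HE_g[\xi+c]$, and letting $\xi$ range over $L^2_{\F_T}(\CW,\R)$ (so that $\xi+c$ ranges over all of $L^2_{\F_T}(\CW,\R)$) yields $\HE_{g^c}[\zeta]=\HE_g[\zeta]$ for every $\zeta\in L^2_{\F_T}(\CW,\R)$.

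At this point I would invoke the characterisation of extended $g$-expectations proved just after Theorem \ref{prop}, whose final clause asserts that $\HE_{g_1}[\cdot]\equiv\HE_{g_2}[\cdot]$ on $L^2_{\F_T}(\CW,\R)$ forces $g_1=g_2$, $dP\times dt$-a.s.; this is precisely the device that promotes the time-$0$ equality to an identity of generators without my having to produce conditional data by hand. Applying it with $g_1=g^c$ and $g_2=g$ gives $g(t,y-c,z)=g(t,y,z)$, $dP\times dt$-a.s., for every $(y,z)$, and since $c\in\R$ is arbitrary this says exactly that $g$ does not depend on $y$, i.e.\ (3), closing the cycle. I expect the only delicate points to be verifying that $g^c$ inherits (A5), which is what keeps its $g$-expectation defined, and confirming the $g^c$-transposition identity, both of which are routine substitutions.
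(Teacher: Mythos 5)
Your proof is correct and follows essentially the route the paper intends: the corollary is stated without proof, to be read off from Theorem \ref{th1}(2) for $(3)\Rightarrow(2)$, trivial specialisation for $(2)\Rightarrow(1)$, and Theorem \ref{converprop}(2) for $(1)\Rightarrow(3)$, and your shifted-generator argument for the last step is exactly the $g^c$ device used inside the paper's proof of Theorem \ref{converprop}(2). If anything you are more careful than the paper, since you explicitly bridge the gap between the time-$0$, horizon-$T$ hypothesis in (1) and the all-times, all-horizons hypothesis of the converse comparison machinery by routing through the characterisation $\HE_{g_1}[\cdot]\equiv\HE_{g_2}[\cdot]\Leftrightarrow g_1=g_2$ proved after Theorem \ref{prop}.
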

%
%
%
%

\begin{coro}[{\bf Sub-additivity}]\label{Subadditivity}
Let $g$ be any given function satisfied assumptions (A1) and (A5), then the following
statements are equivalent,
\begin{enumerate}[(1).]
\item $ \HE_g[\cdot] $ is sub-additive;
\item $ \HE_g[\cdot|\F_t] $ is sub-additive for any  $t\in[0,T]$, that is, for any $\xi_1,\xi_2\in L^2_{\F_T}(\CW,\R)$,
      $$\HE_g[\xi_1+\xi_2|\F_t]\leq\HE_g[\xi_1|\F_t]+\HE_g[\xi_2|\F_t]\ P\text{-a.s.};$$
\item  $g$ is independent of $y$ and $g$ is sub-additive with respect to $z$.
\end{enumerate}
\end{coro}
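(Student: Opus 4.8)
The plan is to prove the equivalences through the implications $(3)\Rightarrow(2)$, $(2)\Rightarrow(1)$, $(2)\Rightarrow(3)$ and $(1)\Rightarrow(2)$. Two structural observations are used throughout. First, under (A1) and (A5) one has $g(t,0,0)=0$, so (A2) and (A3) hold automatically and both Theorem \ref{th1} and Theorem \ref{converprop} are available. Second, by Definition \ref{de2} and Theorem \ref{condiexp} we have $\HE_g[\xi|\F_t]=Y_t(g,T,\xi)$, and since the transposition solution restricts consistently (Proposition \ref{prop3}), $Y_t(g,T,\xi)=Y_t(g,T',\xi)$ whenever $\xi\in L^2_{\F_{T'}}(\CW,\R)$ and $t\le T'$; hence statement (2) is exactly the transposition-solution sub-additivity (\ref{subeq}) ranging over all horizons $T'\in[0,T]$.

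For $(3)\Rightarrow(2)$: if $g$ is independent of $y$ and sub-additive in $z$, then $g(t,y_1+y_2,z_1+z_2)\le g(t,y_1,z_1)+g(t,y_2,z_2)$, so $g$ is sub-additive in $(y,z)$, and Theorem \ref{th1}(3) with $T'=T$ gives $Y_t(g,T,\xi_1+\xi_2)\le Y_t(g,T,\xi_1)+Y_t(g,T,\xi_2)$ for every $t$, which is (2). The implication $(2)\Rightarrow(1)$ follows on setting $t=0$ and using $\HE_g[\cdot]=\HE_g[\cdot|\F_0]$. For $(2)\Rightarrow(3)$: since (2) is (\ref{subeq}), Theorem \ref{converprop}(3) yields that $g$ is sub-additive in $(y,z)$ $dP\times dt$-a.s.; taking $z_2=0$ and invoking (A5) gives $g(t,y_1+y_2,z_1)\le g(t,y_1,z_1)+g(t,y_2,0)=g(t,y_1,z_1)$, and exchanging the roles of $y_1+y_2$ and $y_1$ forces $g(t,\cdot,z)$ to be constant in $y$ (first on a countable dense set of $(y,z)$, then everywhere by the Lipschitz continuity in (A1)). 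Sub-additivity in $(y,z)$ then reduces to sub-additivity in $z$, giving (3).

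The genuinely delicate step is $(1)\Rightarrow(2)$, the passage from unconditional to conditional sub-additivity, and I expect this to be the main obstacle. It is here that the general (non-natural) filtration matters: the increment $W_{t+\ep}-W_t$ need not be independent of $\F_t$, so the time-shift underlying the natural-filtration converse comparison argument of \cite{Jiang2008} is unavailable, and $g(t,\cdot,\cdot)$ for $t>0$ cannot be read off from time-$0$ data alone. My plan here is a localization: supposing $\HE_g[\xi_1+\xi_2|\F_t]>\HE_g[\xi_1|\F_t]+\HE_g[\xi_2|\F_t]$ on some $A\in\F_t$ with $P(A)>0$, I would replace $\xi_i$ by $I_A\xi_i$, use the ``Zero-one'' law (Theorem \ref{prop}(2)) and time consistency (Theorem \ref{prop}(6)) to push the comparison down to the unconditional $\HE_g$, and then apply the strict comparison theorem (Theorem \ref{comparison} together with Theorem \ref{prop}(1)) to contradict (1). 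The hard part is that the soft monotonicity estimates produce inequalities pointing in the same direction as sub-additivity and so do not by themselves close the contradiction; I therefore expect to have to exploit (1) simultaneously over all intermediate horizons $T'$ and to paste the transposition solutions across $[0,t]$ and $[t,T']$ via Proposition \ref{prop3}, rather than rely on the monotonicity argument alone.
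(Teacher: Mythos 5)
Your implications $(3)\Rightarrow(2)\Rightarrow(1)$ and $(2)\Rightarrow(3)$ are sound; in fact your route to ``$g$ is independent of $y$'' inside $(2)\Rightarrow(3)$ --- setting $z_2=0$ in the pointwise sub-additivity delivered by Theorem \ref{converprop}(3) and invoking (A5) to get $g(t,y_1+y_2,z_1)\le g(t,y_1,z_1)$, then symmetrizing --- is a clean direct shortcut that the paper does not use (the paper instead derives independence of $y$ from translation invariance of $\HE_g[\cdot]$ via Theorem \ref{converprop}(2)). However, the step you yourself flag as the crux, $(1)\Rightarrow(2)$, is left as a plan rather than a proof, and the plan as stated does not close. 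As you correctly diagnose, localizing on $A_t=\{\HE_g[\xi_1+\xi_2|\F_t]>\HE_g[\xi_1|\F_t]+\HE_g[\xi_2|\F_t]\}$ and applying the Zero-one law and time consistency only yields $\HE_g[I_{A_t}\HE_g[\xi_1+\xi_2|\F_t]]\le\HE_g[I_{A_t}\HE_g[\xi_1|\F_t]]+\HE_g[I_{A_t}\HE_g[\xi_2|\F_t]]$, and turning the strict pointwise inequality on $A_t$ into a contradiction would require \emph{super}-additivity on the right-hand side, which you do not have. Your fallback --- exploiting (1) over all intermediate horizons $T'$ and pasting transposition solutions via Proposition \ref{prop3} --- is not carried out and is not the mechanism that works; this is a genuine gap.

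The missing idea is a recentering made possible by translation invariance, and it forces a specific order of argument. First deduce from (1) alone that $\HE_g[\xi+c]=\HE_g[\xi]+c$ for every constant $c$ (apply sub-additivity to $\xi+c$ and to $(\xi+c)+(-c)$, using $\HE_g[\pm c]=\pm c$); by Theorem \ref{converprop}(2) this already gives that $g$ is independent of $y$ \emph{before} you attempt $(1)\Rightarrow(2)$. Theorem \ref{th1}(2) then upgrades this to translation invariance by arbitrary $\beta\in L^2_{\F_t}(\CW,\R)$. With that in hand, set $\eta_i=I_{A_t}\bigl(\xi_i-\HE_g[\xi_i|\F_t]\bigr)$; translation invariance, the Zero-one law and time consistency give $\HE_g[\eta_i]=0$ for $i=1,2$, while $\HE_g[\eta_1+\eta_2]=\HE_g\bigl[I_{A_t}\HE_g[\xi_1+\xi_2|\F_t]-I_{A_t}(\HE_g[\xi_1|\F_t]+\HE_g[\xi_2|\F_t])\bigr]>0$ by strict monotonicity whenever $P(A_t)>0$. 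This contradicts $\HE_g[\eta_1+\eta_2]\le\HE_g[\eta_1]+\HE_g[\eta_2]=0$, proving $P(A_t)=0$ and hence $(1)\Rightarrow(2)$; your $(2)\Rightarrow(3)$ then closes the cycle. Without this recentering the contradiction cannot be reached, so the proposal as written does not establish the equivalence.
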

\begin{proof}[\textup{\bf Proof}] Thanks to Theorem \ref{th1} (3), we only need to prove
  $(1)\Rightarrow(3).$
 We first prove that (1) implies the translation
invariance property.  Suppose that (1) holds for $\HE_g[\cdot]$, then for any $\xi\in L^2_{\F_T}(\CW,\R)$ and $c\in\R$, we have
$$\HE_g[\xi+c]\leq\HE_g[\xi]+\HE_g[c]=\HE_g[\xi]+c,$$
and
$$\HE_g[\xi]=\HE_g[\xi+c-c]\leq\HE_g[\xi+c]+\HE_g[-c]=\HE_g[\xi+c]-c,$$
that is $\HE_g[\xi+c]=\HE_g[\xi]+c$. It follows from Theorem \ref{converprop} (2) that $g$ is independent of $y$.

Now we prove that (1) implies (2). For any $\xi_1,\xi_2\in L^2_{\F_T}(\CW,\R)$ and $t\in[0,T]$, set
 $$A_t:=\lt\{ \HE_g[\xi_1+\xi_2|\F_t]>\HE_g[\xi_1|\F_t]+\HE_g[\xi_2|\F_t]\rt\},$$
thus $A_t \in \F_t$. If $P(A_t)>0$, then we have
$$I_{A_t}\HE_g[\xi_1+\xi_2|\F_t]-I_{A_t}\lt(\HE_g[\xi_1|\F_t]+\HE_g[\xi_2|\F_t]\rt)\geq0,$$
and
$$P\Big(I_{A_t}\HE_g[\xi_1+\xi_2|\F_t]-I_{A_t}\lt(\HE_g[\xi_1|\F_t]+\HE_g[\xi_2|\F_t]\rt)>0\Big)>0.$$
Since $g$ is independent of $y$, it follows from Theorem \ref{prop} (2)(6) and Theorem \ref{th1} (2) that
\begin{align*}
&\HE_g\lt[ I_{A_t}(\xi_1-\HE_g[\xi_1|\F_t])+I_{A_t}(\xi_2-\HE_g[\xi_2|\F_t])\rt]\\
=& \HE_g\big[\HE_g\lt[ I_{A_t}(\xi_1-\HE_g[\xi_1|\F_t])+I_{A_t}(\xi_2-\HE_g[\xi_2|\F_t])\big|\mathcal{F}_t\rt]\big]\\
=& \HE_g\big[\HE_g\lt[ I_{A_t}(\xi_1+\xi_2)\big|\mathcal{F}_t]-I_{A_t}(\HE_g[\xi_1|\F_t\rt]+\HE_g[\xi_2|\F_t])\big]\\
=&\HE_g\lt[ I_{A_t}\HE_g[\xi_1+\xi_2|\F_t]-I_{A_t}(\HE_g[\xi_1|\F_t]+\HE_g[\xi_2|\F_t])\rt]>0.
\end{align*}
Similarly, we have
\begin{align*}
\HE_g\lt[ I_{A_t}(\xi_1-\HE_g[\xi_1|\F_t])\rt]=\HE_g\lt[ \HE_g\lt[I_{A_t}(\xi_1-\HE_g[\xi_1|\F_t])|\F_t\rt]\rt]=\HE_g\lt[ \HE_g\lt[I_{A_t}\xi_1|\F_t\rt]-I_{A_t}\HE_g[\xi_1|\F_t]\rt]=0,\\
\HE_g\lt[ I_{A_t}(\xi_2-\HE_g[\xi_2|\F_t])\rt]=\HE_g\lt[ \HE_g\lt[I_{A_t}(\xi_2-\HE_g[\xi_2|\F_t])|\F_t\rt]\rt]=\HE_g\lt[ \HE_g\lt[I_{A_t}\xi_2|\F_t\rt]-I_{A_t}\HE_g[\xi_2|\F_t]\rt]=0.
\end{align*}
Thus
$$\HE_g\lt[ I_{A_t}(\xi_1-\HE_g[\xi_1|\F_t])+I_{A_t}(\xi_2-\HE_g[\xi_2|\F_t])\rt]>0=\HE_g\lt[ I_{A_t}(\xi_1-\HE_g[\xi_1|\F_t])\rt]+\HE_g\lt[ I_{A_t}(\xi_2-\HE_g[\xi_2|\F_t])\rt],$$
which is a contradiction to (1). Hence, $P(A_t)=0$ and (2) follows. Combining with Theorem \ref{converprop} (3), we can get (3).

\end{proof}

\begin{coro}[{\bf Convexity}]\label{convex}
Let $g$ be any given function satisfied assumptions (A1) and (A5), then the following
statements are equivalent,
\begin{enumerate}[(1).]
\item $ \HE_g[\cdot] $ is convex;
\item $ \HE_g[\cdot|\F_t] $ is convex for any  $t\in[0,T]$, that is, for any  $\xi_1,\xi_2\in L^2_{\F_T}(\CW,\R)$ and $\alpha\in[0,1]$,
     $$\HE_g[\alpha\xi_1+(1-\alpha)\xi_2|\F_t]\leq\alpha\HE_g[\xi_1|\F_t]+(1-\alpha)\HE_g[\xi_2|\F_t]\ P\text{-a.s.};$$
\item   $g$ is independent of $y$ and $g$ is convex with respect to $z$.
\end{enumerate}
\end{coro}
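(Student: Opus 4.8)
The plan is to prove the cycle $(3)\Rightarrow(2)\Rightarrow(1)$ together with $(1)\Rightarrow(2)$ and $(2)\Rightarrow(3)$, so that all three become equivalent; the substance sits in $(1)\Rightarrow(2)$ and $(2)\Rightarrow(3)$. I first record that (A5) forces $g(t,0,0)=0$, so (A3) holds automatically and every earlier result demanding (A1) and (A3) — in particular Theorem \ref{converprop} and Corollary \ref{rep-cor} — is available here. The two cheap implications come first: for $(3)\Rightarrow(2)$, if $g$ is independent of $y$ and convex in $z$ then it is convex in $(y,z)$, and Theorem \ref{th1}(4), read through the identification $\HE_g[\cdot\,|\F_t]=Y_t$ of Theorem \ref{condiexp}, yields the conditional convexity of $\HE_g[\cdot\,|\F_t]$; for $(2)\Rightarrow(1)$ one simply takes $t=0$.

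For $(1)\Rightarrow(2)$ I would follow the architecture of Corollary \ref{Subadditivity}, but the first move there — deducing translation invariance — is exactly where convexity behaves differently, and this is the step I expect to be the main obstacle. Sub-additivity gives translation invariance for free from $\HE_g[c]=c$; convexity, being one-sided, does not. I would instead recover it by a recession (perspective) argument: for $\xi\in L^2_{\F_T}(\CW,\R)$, $c\in\R$ and $\alpha\in(0,1)$, the decomposition $\xi+c=\alpha(\xi/\alpha)+(1-\alpha)\bigl(c/(1-\alpha)\bigr)$ combined with convexity and the constant-preserving property (Theorem \ref{prop}(5)) gives $\HE_g[\xi+c]\le\alpha\,\HE_g[\xi/\alpha]+c$; letting $\alpha\to1^-$ and using the Lipschitz estimate of Theorem \ref{prop}(3) to send $\alpha\,\HE_g[\xi/\alpha]\to\HE_g[\xi]$ produces $\HE_g[\xi+c]\le\HE_g[\xi]+c$, and applying the same inequality to $(\xi+c,-c)$ gives the reverse bound. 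Thus $\HE_g[\xi+c]=\HE_g[\xi]+c$, and Corollary \ref{Translation} (equivalently Theorem \ref{converprop}(2)) shows $g$ is independent of $y$; this in turn supplies the conditional translation invariance $\HE_g[\xi+\beta\,|\F_t]=\HE_g[\xi\,|\F_t]+\beta$ for $\beta\in L^2_{\F_t}(\CW,\R)$.

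With translation invariance in hand the contradiction argument of Corollary \ref{Subadditivity} carries over. Fixing $t,\alpha,\xi_1,\xi_2$, I set $A:=\{\HE_g[\alpha\xi_1+(1-\alpha)\xi_2\,|\F_t]>\alpha\HE_g[\xi_1|\F_t]+(1-\alpha)\HE_g[\xi_2|\F_t]\}\in\F_t$ and assume $P(A)>0$. Writing $V_i:=I_A(\xi_i-\HE_g[\xi_i|\F_t])$ and $V:=\alpha V_1+(1-\alpha)V_2$, the ``Zero-one'' law (Theorem \ref{prop}(2)), time consistency (Theorem \ref{prop}(6)) and the conditional translation invariance give $\HE_g[V_i]=0$ for $i=1,2$, while the same tools give $\HE_g[V|\F_t]\ge0$ with strict inequality on $A$; strict monotonicity (Theorem \ref{prop}(1)) then forces $\HE_g[V]>0$. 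On the other hand convexity of $\HE_g[\cdot]$ gives $\HE_g[V]\le\alpha\HE_g[V_1]+(1-\alpha)\HE_g[V_2]=0$, a contradiction. Hence $P(A)=0$, which is (2).

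Finally, for $(2)\Rightarrow(3)$: the conditional convexity in (2), stated at terminal time $T$, transfers to every $T'\le T$ through time consistency and Theorem \ref{prop}(4) (for $\xi\in L^2_{\F_{T'}}(\CW,\R)$ one has $Y_t(g,T',\xi)=Y_t(g,T,\xi)$ on $[0,T']$), so the hypothesis (\ref{coneq}) of Theorem \ref{converprop}(4) is met and $g$ is convex in $(y,z)$. The remark following Theorem \ref{th1}, which uses (A1), (A5) and convexity to conclude that a convex generator cannot depend on $y$, then gives independence of $y$, whence convexity in $(y,z)$ collapses to convexity in $z$; this is (3). Beyond the recession step for translation invariance, I expect the only care needed to be the limit interchange in that step and the routine verification that (\ref{coneq}) may be invoked at all terminal times $T'$.
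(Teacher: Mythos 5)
Your proof is correct and follows essentially the same route as the paper: reduce to $(1)\Rightarrow(3)$ via Theorem \ref{th1}(4), extract translation invariance from convexity by a limiting convex-combination argument controlled by the continuity estimate of Theorem \ref{prop}(3), run the contradiction argument of Corollary \ref{Subadditivity} for the conditional statement, and close with Theorem \ref{converprop}. Your recession decomposition $\xi+c=\alpha(\xi/\alpha)+(1-\alpha)\bigl(c/(1-\alpha)\bigr)$ is just a symmetric variant of the paper's $\bigl(1-\tfrac1n\bigr)\xi+\tfrac1n(nc)$, and your explicit transfer of (\ref{coneq}) to all terminal times $T'$ is a point the paper leaves implicit but which is handled exactly as you describe.
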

\begin{proof}[\textup{\bf Proof}] Due to Theorem \ref{th1} (4), we only need to prove $(1)\Rightarrow(3).$
 We first prove that (1) implies the translation
invariance property.  Suppose that (1) holds for $\HE_g[\cdot]$, then for any $\xi\in L^2_{\F_T}(\CW,\R)$ , $c\in\R$ and $n\geq1$, we have
$$\HE_g\lt[\lt(1-\frac{1}{n}\rt)\xi+c\rt]=\HE_g\lt[\lt(1-\frac{1}{n}\rt)\xi+\frac{1}{n}(nc)\rt]\leq\lt(1-\frac{1}{n}\rt)\HE_g[\xi]+c.$$
Then, it follows  Theorem \ref{prop} (3) that
$$\HE_g[\xi+c]=\lim\limits_{n\to\infty}\HE_g\lt[\lt(1-\frac{1}{n}\rt)\xi+c\rt]\leq\lim\limits_{n\to\infty}\lt(1-\frac{1}{n}\rt)\HE_g[\xi]+c=\HE_g[\xi]+c.$$
Meanwhile,
$$\HE_g[\xi]=\HE_g[\xi+c-c]\leq\HE_g[\xi+c]-c,$$
thus, $\HE_g[\xi+c]=\HE_g[\xi]+c$. Then $g$ is independent of $y$ follows from Theorem \ref{converprop} (2).

Then we can prove $(1)\Rightarrow(2)$ similarly as the proof of Corollary \ref{Subadditivity}. Therefore, we get (3) from (2) and Theorem \ref{converprop} (4).

\end{proof}

\hspace{-0.6cm}\textbf{Acknowledgements}\\The work is supported by the National Key R$\&$D Program of China (Grant No. 2018YFA0703900) and the National Natural Science Foundation of China (Grant No. 11601280) and the Natural Science Foundation of Shandong Province of China
(Grant Nos. ZR2016AQ11 and ZR2016AQ13).

\end{document}